\numberwithin{equation}{section}
\def\frk{\frak}               % font for "Fraktur"
\def\Phi{{\frk n}}
\def\Phi{{\frk N}}
\def\opn#1#2{\def#1{\operatorname{#2}}} % to make operators
\opn\chara{char} \opn\length{\ell} \opn\pd{pd} \opn\rk{rk}
\opn\projdim{proj\,dim} \opn\injdim{inj\,dim} \opn\rank{rank}
\opn\depth{depth} \opn\grade{grade} \opn\height{height}
\opn\embdim{emb\,dim} \opn\codim{codim}
\opn\Tr{Tr} \opn\bigrank{big\,rank}
\opn\superheight{superheight}\opn\lcm{lcm}
\opn\trdeg{tr\,deg}%\emph{
\opn\reg{reg} \opn\lreg{lreg} \opn\ini{in} \opn\lpd{lpd}
\opn\size{size}\opn\bigsize{bigsize}
\opn\cosize{cosize}\opn\bigcosize{bigcosize}
\opn\sdepth{sdepth}\opn\sreg{sreg}
\opn\link{link}\opn\fdepth{fdepth}
\opn\index{index}
\opn\index{index}
\opn\indeg{indeg}
\opn\N{N}
\opn\SSC{SSC}
\opn\SC{SC}
\opn\lk{lk}
\opn\div{div} \opn\Div{Div} \opn\cl{cl} \opn\Cl{Cl}
\opn\Spec{Spec} \opn\Supp{Supp} \opn\supp{supp} \opn\Sing{Sing}
\opn\Ass{Ass} \opn\Min{Min}\opn\Mon{Mon} \opn\dstab{dstab} \opn\astab{astab}
\opn\Syz{Syz}
\opn\reg{reg}
\opn\Ann{Ann} \opn\Rad{Rad} \opn\Soc{Soc}
\opn\Im{Im} \opn\Ker{Ker} \opn\Coker{Coker} \opn\Am{Am}
\opn\Hom{Hom} \opn\Tor{Tor} \opn\Ext{Ext} \opn\End{End}\opn\Der{Der}
\opn\Aut{Aut} \opn\id{id}
\opn\nat{nat}
\opn\pff{pf}%   \pf exists already
\opn\Pf{Pf} \opn\GL{GL} \opn\SL{SL} \opn\mod{mod} \opn\ord{ord}
\opn\Gin{Gin} \opn\Hilb{Hilb}\opn\sort{sort}
\opn\initial{init}
\opn\ende{end}
\opn\height{height}
\opn\type{type}
\opn\aff{aff} \opn\con{conv} \opn\relint{relint} \opn\st{st}
\opn\lk{lk} \opn\cn{cn} \opn\core{core} \opn\vol{vol}
\opn\link{link} \opn\Link{Link}\opn\lex{lex}
\opn\gr{gr}
\def\pot#1#2{#1[\kern-0.28ex[#2]\kern-0.28ex]}
\opn\dirlim{\underrightarrow{\lim}}
\opn\inivlim{\underleftarrow{\lim}}
\def\Implies{\ifmmode\Longrightarrow \else
        \unskip${}\Longrightarrow{}$\ignorespaces\fi}
\def\implies{\ifmmode\Rightarrow \else
        \unskip${}\Rightarrow{}$\ignorespaces\fi}
\def\iff{\ifmmode\Longleftrightarrow \else
        \unskip${}\Longleftrightarrow{}$\ignorespaces\fi}
\newtheorem{Theorem}{Theorem}[section]
 \newtheorem{Lemma}[Theorem]{Lemma}
 \newtheorem{Corollary}[Theorem]{Corollary}
 \newtheorem{Proposition}[Theorem]{Proposition}
 \newtheorem{Remark}[Theorem]{Remark}
 \newtheorem{Example}[Theorem]{Example}
 \newtheorem{Definition}[Theorem]{Definition}
\let\epsilon\varepsilon
\let\kappa=\varkappa
\def\qed{\ifhmode\textqed\fi
      \ifmmode\ifinner\quad\qedsymbol\else\dispqed\fi\fi}
\def\textqed{\unskip\nobreak\penalty50
       \hskip2em\hbox{}\nobreak\hfil\qedsymbol
       \parfillskip=0pt \finalhyphendemerits=0}
\def\dispqed{\rlap{\qquad\qedsymbol}}
\opn\dis{dis}
\def\pnt{{\raise0.5mm\hbox{\large\bf.}}}
\opn\Lex{Lex}
\begin{document}

\title{Monomial ideals with  regular quotients and some edge rings}

\author  {Dancheng Lu, Hao Zhou$^*$}

\thanks{*Corresponding author. }

\address{School  of Mathematical Sciences, Soochow University, 215006 Suzhou, P.R.China}
\email{ludancheng@suda.edu.cn, zhouhao9939@outlook.com,}

\keywords{Betti numbers, toric ring, regular quotients, bipartite graphs, initial ideals }

\subjclass[2010]{Primary 13D45, 13D05; Secondary 13D02.}

\begin{abstract} We introduce and study monomial ideals with regular quotients, which can be seen as an extension of monomial ideals with linear quotients. Based on these investigations, we are able to calculate the Betti numbers of toric ideals belonging to various classes of edge rings.
\end{abstract}
\maketitle
\section*{Introduction}
Let $\mathbb{K}$ be a field, $R=\mathbb{K}[x_1,\ldots,x_n]$ the polynomial ring in $n$ variables. The concept of (monomial) ideals with linear quotients was introduced by Herzog and Takayama \cite{HT02}.  This notion is useful in many ways, specifically, it is an effective method for proving that an ideal has a linear resolution and for constructing a minimal graded free resolution for some ideals, see e.g. \cite{HHZ04} and \cite{C}.  In this paper, we embark on a study of monomial ideals with regular quotients, which can be viewed as a natural extension of monomial ideals possessing linear quotients.  Let $I\subset R$ be a monomial ideal. We say $I$ has {\it regular quotients}  with respect to the order $f_1,\ldots, f_m$ of minimal generators of $I$ if the quotient
ideal $(f_1,\ldots, f_{i-1}): f_i$
is generated by a regular sequence on $R$ for all $i=2,\ldots, m.$ (notice
that this property depends on the order of the generators).

Let $I$ be a monomial ideal with regular quotients.  In the most general scenario, we provide upper bounds for the regularity and projective dimension of $I$. Furthermore, we explore the conditions under which these upper bounds are attained, see Corollary~\ref{2.5}. To study the Betti numbers of such ideals, we specifically examine the ideals where each quotient is generated by a regular sequence of the same degree.  Let $I=(f_1,\ldots,f_m)$ be a monomial ideal  such that the quotient $L_k:=(f_1,\ldots,f_{k-1}):f_k$ is generated by a regular sequence of length $r_k$ in which every element has degree $a_k$ for $k=2,\ldots,m$.   Denote $\deg(f_k)$ by $d_k$ for $k=1,\ldots,m$. By the construction of mapping cones, we obtain the following upper bound of $\beta_{i,j}(I)$: $$ \beta_{i,j}(I)\leq \beta_{i,j}(R/L_m[-d_m])+\cdots+\beta_{i,j}(R/L_2[-d_2])+\beta_{i,j}(R[-d_1]).$$
  Note that the right-hand side represents a number that is determined by the values of $d_i$, $r_i$ and $a_i$. If $a_2=\cdots=a_m=1$, i.e. $I$ has linear quotients, the upper bound given above is actually the true value of $\beta_{i,j}(I)$, see \cite[Lemma 2.1 and Proposition 2.2]{SJZ10} or \cite[Corollary 2.7]{SV}. In this paper, we show in Remark~\ref{r=1} that this is also the case if $r_2=\cdots=r_m=1$.  In  general, we give a sufficient condition in terms of a relation among $d_i$'s and $a_i$'s for which the upper upper is achieved, see Theorem~\ref{main1} and Corollary~\ref{main11}.

Let $G=(V,E)$ be a simple graph with vertex set $V=\{x_1,\ldots,x_n\}$ and edge set $E=\{e_1,\ldots,e_r\}$. The {\it edge ring} $\mathbb{K}[G]$ is the toric ring $\mathbb{K}[x_e\:\; e\in E(G)]\subset \mathbb{K}[x_1,\ldots,x_n]$. Here, $x_e=\prod_{x_i\in e}x_i$ for all $e\in E(G)$. Let $\mathbb{K}[E]$ be the polynomial ring $\mathbb{K}[e_1,\ldots,e_r]$. Then there is exactly one ring homomorphism
$$\phi:  \mathbb{K}[E]\rightarrow \mathbb{K}[V] \quad {\mbox{ such that }} ~e_i\mapsto x_{e_i} ~{i=1,\ldots,r}$$
The kernel of the homomorphism map $\phi$ is called the {\it toric ideal} or the {\it defining ideal} of $\mathbb{K}[G]$ or $G$, which is denoted by $I_G$. It follows  that $\mathbb{K}[G]\cong \mathbb{K}[E]/I_G$.  In general, calculating the Betti numbers or other invariants of a toric ideal for a specific type of graphs is challenging. For recent advancements in this area, please refer to \cite{BOV}, \cite{FKV}, \cite{GHK} and \cite{HBO}, etc.

 In this paper, we introduce two classes of simple graphs, for which the toric ideals have an initial ideal with regular quotients.  The first class of graphs is a class of  bipartite graphs, which we denote by $B_{\underline{\ell},h}$, where $h\geq 2$ and $\underline{\ell}=(\ell_1,\ldots,\ell_h)\in \mathbb{Z}_{\geq 1}^h.$ We  calculate the regularity and projective dimension of the edge ring of this class of graphs, and in the case when $\ell_2=\cdots=\ell_{h}$, we  also compute the Betti numbers of the edge ring of these graphs. As a byproduct, we demonstrate the existence of a bipartite graph $G$ such that  $\mathrm{reg}(I_G)=r$ and $\mathrm{pdim}(I_G)=p$, where $p,r$ are any given integers with $p\geq 0$ and $r\geq 2$. We refer to \cite{NH} for the background of this result. The second class of graphs is obtained by attaching two odd cycles to the graph $B_{\underline{\ell},h}$ when $\ell_1=\cdots=\ell_h$. For such graphs, we also compute the Betti numbers of their edge rings. In all of these computations, we rely on a sufficient condition that ensures the toric ideal and its initial ideal share the same Betti numbers. We present this condition  in Theorem~\ref{bridge}, which is inspired by the proof of \cite[Corollary 3.6]{FKV}.

The paper is organized as follows. Section 1 provides a brief overview of toric ideals of graphs and some homological invariants. In Section 2, we derive a formula for the graded Betti numbers of a monomial ideal, assuming that its successive colon ideals are generated by a regular sequence. Section 3 presents a sufficient condition for the graded Betti numbers of the toric ideal of a simple  graph to be equal to the graded Betti numbers of its initial ideal. In Sections 4 and 5, we introduce two classes of  graphs respectively and compute the graded Betti numbers and other invariants of their toric ideals, utilizing the results obtained in the preceding sections.
\section{Preliminaries}
In this section, we provide a brief review of the notation and fundamental facts that will be utilized later on.
\subsection{Toric ideals of graphs}
\hspace*{\fill} \\
Let $G$ be a simple graph, i.e., a finite graph without loops and multiple edges. Recall that a walk of $G$ of length $q$ is a subgraph $W$ of $G$ such that $E(W)=\{\{v_0,v_1\},\{v_1,v_2\},\ldots, \{v_{q-1},v_q\}\}$, where $v_0,v_1,\ldots,v_q$ are vertices of $G$. An walk $W$ of $G$ is even if $q$ is even, and it is closed if $v_0=v_q$. A cycle with edge set $\{\{v_0,v_1\},\{v_1,v_2\},\{v_{q-1},v_q=v_0\}\}$ is a special even closed walk where $v_1,\ldots,v_q$ are pairwise distinct.

The generators of the toric ideal of $I_G$ are binomials which is tightly related to even closed walks in $G$. Given an even closed walk $W$ of $G$ with
$$E(W)=\{\{v_0,v_1\},\{v_1,v_2\},\ldots,\{v_{2q-2},v_{2q-1}\},\{v_{2q-1},v_0\}\}$$
We associate $W$ with the binomial defined by
$$f_W:=\prod\limits_{j=1}^{q}e_{2j-1}-\prod\limits_{j=1}^{q}e_{2j},$$
where $e_j=\{v_{j-1},v_j\}$ for $1\leq j\leq 2q-1$ and $e_{2q}=\{v_{2q-1},v_0\}$. A binomial $f = u - v \in I_{G}$ is called a {\it primitive binomial} if there is no binomial $g = u' - v' \in I_{G}$ such that $u'|u$ and $v'|v$. An even closed walk $W$ of $G$ is a {\it primitive even closed walk} if its associated binomial $f_{W}$ is a primitive binomial in $I_G$.

In this paper, we use $\mathcal{PC}(G)$  for the set of primitive even closed walks of $G$. Then $\mathcal{PC}(G)$  is a finite set, and the set  $$\{f_W\:\; W\in \mathcal{PC}(G)\}$$ is the universal  Gr$\ddot{\mathrm{o}}$bner base of $I_G$ by e.g. \cite[Proposition~5.19]{EH}.
 In particular, it is a Gr$\ddot{\mathrm{o}}$bner base of $I_G$ with respect to any monomial order. The set of primitive even walks of a graph $G$ was described in \cite{OH} explicitly.
\begin{Proposition} {\em (\cite[Lemma~3.2]{OH})} \label{primitive}
Let $G$ be a simple connected graph. Then a primitive even closed walk $\Gamma$ of $G$ is one of the following:
\begin{enumerate}
  \item $\Gamma$ is an even cycle of $G$;
  \item $\Gamma$=$(C_1,C_2)$, where $C_1$ and $C_2$ are odd cycles of $G$ having exactly one common vertex;
  \item $\Gamma$=$(C_1,\Gamma_1,C_2,\Gamma_2)$,  where $C_1$ and $C_2$ are odd cycles of $G$ having no common vertex and where $\Gamma_1$ and $\Gamma_2$ are walks of $G$ both of which combine a vertex $v_1$ of $C_1$ and a vertex $v_2$ of $C_2$.
\end{enumerate}
\end{Proposition}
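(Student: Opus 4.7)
The plan is to classify $\Gamma$ via its contiguous closed sub-walks, built around one central observation. The key observation is that every non-trivial contiguous closed sub-walk $W$ of $\Gamma$ must have odd length, because otherwise its associated binomial $f_W$ lies in $I_G$ and, up to sign, its two monomials divide the corresponding monomials of $f_\Gamma$, contradicting primitivity of $f_\Gamma$.

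From this constraint the three cases emerge as follows. If $\Gamma$ has no non-trivial contiguous closed sub-walk, then the vertex sequence $v_0, v_1, \ldots, v_{2q-1}$ is pairwise distinct and $\Gamma$ is a simple cycle of even length $2q$, giving case (1). Otherwise I would let $C_1$ be a minimal non-trivial contiguous closed sub-walk of $\Gamma$; minimality forces $C_1$ to be a simple cycle, and the key observation forces $|C_1|$ odd. The cyclic complement of $C_1$ in $\Gamma$ is again a contiguous closed sub-walk, of odd length $2q - |C_1|$, so by a parallel minimality argument applied within it one extracts a second odd simple cycle $C_2$.

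The split between cases (2) and (3) is then decided by whether $C_1$ and $C_2$ share a vertex. If they do, no further edges of $\Gamma$ can remain and $\Gamma = (C_1, C_2)$ falls into case (2). If they are vertex-disjoint, the leftover edges of $\Gamma$ outside $C_1 \cup C_2$ arrange themselves into two walks $\Gamma_1, \Gamma_2$ each joining a vertex of $C_1$ to a vertex of $C_2$, giving case (3). Verifying that the residual edges partition in this specific way uses the fact that every contiguous closed sub-walk passing through the cycles is accounted for by $C_1$ and $C_2$ themselves.

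The main obstacle will be ruling out the degenerate intermediate configurations. In particular, one must show that $C_1$ and $C_2$ cannot share two or more vertices---otherwise one could exhibit an even-length contiguous closed sub-walk inside $C_1 \cup C_2$, violating the key observation---and that in case (3) the residual edges truly form two separate connecting walks rather than a single longer walk counted twice, which is forced by the parity alignment of odd-indexed and even-indexed positions in $f_\Gamma$. Both facts reduce to careful parity bookkeeping on the contiguous closed sub-walks of each putative configuration, and this enumeration is the technically delicate part of the argument.
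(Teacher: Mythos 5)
Your key observation is correct and is indeed the right first step: a proper contiguous closed sub-walk of even length would give a binomial of $I_G$ whose two monomials divide, up to sign, the two monomials of $f_\Gamma$, so all such sub-walks are odd; this also yields that every vertex occurs at most twice, it settles case (1), and it lets you extract two odd cycles $C_1,C_2$ as contiguous arcs. (For the record, the paper offers no proof of this proposition at all — it is quoted from Ohsugi--Hibi — so there is nothing to compare against except the original argument.) The genuine gap is in your dichotomy step. Neither the claim that a common vertex of $C_1$ and $C_2$ forces $\Gamma=(C_1,C_2)$ with no residual edges, nor the claim that two common vertices would produce an even contiguous closed sub-walk, follows from parity bookkeeping on contiguous closed sub-walks, and the latter is the only tool your plan provides.

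Concretely, consider the graph with edges $\{a,b\},\{b,w\},\{w,a\},\{w,c\},\{c,d\},\{d,w\},\{a,x\},\{x,c\},\{c,y\},\{y,a\}$ and the even closed walk $\Gamma=(a,b,w,a,x,c,d,w,c,y,a)$ of length $10$. Its repeated vertices are $a,w,c$, and every proper contiguous closed sub-walk is odd (the arc pairs have lengths $3/7$, $5/5$, $3/7$), so your key observation cannot exclude it; your procedure extracts $C_1=(a,b,w,a)$ and $C_2=(c,d,w,c)$, which share the vertex $w$ while the connecting arcs $a,x,c$ and $c,y,a$ are nonempty — exactly the configuration you assert cannot occur. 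This walk is indeed not primitive (as the proposition demands), but the only witnesses are \emph{non-contiguous} even closed walks: for the $4$-cycle $(a,x,c,w,a)$ the monomial $e_{\{x,c\}}e_{\{w,a\}}$ divides the odd-position monomial of $f_\Gamma$ and $e_{\{a,x\}}e_{\{w,c\}}$ divides the even-position one. So to eliminate ``shared vertex plus nonempty connectors'' (and to force vertex-disjointness in case (3)) you must play primitivity against even closed walks that are not contiguous in $\Gamma$, tracking how their alternating edge pattern sits inside the alternation of $\Gamma$; this is the real content of the Ohsugi--Hibi proof, and it is precisely the part your proposal defers to ``careful parity bookkeeping on contiguous closed sub-walks,'' which provably cannot suffice.
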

\subsection{Homological invariants}
\hspace*{\fill} \\
Let $R:=\mathbb{K}[x_1,\ldots,x_n]$ be the polynomial ring in variables $x_1,\ldots,x_n$, which is standard graded. For a finitely generated graded  $R-$module $M$, there exists the minimal graded free resolution of $M$ that has the form:
$$0\rightarrow\underset{j\in\mathbb{Z}}{\bigoplus}R[-j]^{\beta_{p,j}(M)}\rightarrow \cdots \rightarrow \underset{j\in\mathbb{Z}}{\bigoplus}R[-j]^{\beta_{1,j}(M)}\rightarrow\underset{j\in\mathbb{Z}}{\bigoplus}R[-j]^{\beta_{0,j}(M)}\rightarrow M \rightarrow 0.$$
Here, $R[-j]$ is the cyclic free $R$-module generated in degree $j$ and $\beta_{i,j}(M)={\rm{dim}}_{\mathbb{K}}\mathrm{Tor}_i^R(M,\mathbb{K})_j$ is called the $(i,j)$-th graded Betti number of $M$. A Betti number $\beta_{i,i+j}(M)\neq 0$ is called an {\it extremal Betti number} of $M$ if $\beta_{k,k+\ell}(M)=0$ for any pair $(k,\ell)\neq (i,j)$ with $k\geq i$ and $\ell\geq j$.
Many homological invariants of $M$ can be defined in terms of  its minimal graded free resolution. The {\em{Castelnuovo-Mumford regularity}} of $M$ is
$$\mbox{reg}\,(M):={\mbox{max}}\,\{j-i\:\; \beta_{i,\,j}(M)\neq 0\}.$$
The {\rm{projective dimension}} of $M$ is the length of the minimal graded free resolution:
$$\mbox{pdim}\,(M):=\mbox{max}\,\{i\:\;\ \beta_{i,\,j}(M)\neq 0\}.$$
The {\em{Hilbert series}} of  $M$ is defined to the  power series:
$$HS(M,t):=\underset {i\in \mathbb{Z}}{\sum}[\mbox{dim}_{\mathbb{K}}(M_i)]t^i.$$
It can be computed from the (minimal) graded free resolution of $M$, see  \cite[Theorem~16.2]{P}.
 $$HS(M,t)=\frac{\sum_{i,j}(-1)^i\beta_{i,\,j}(M)t^j}{(1-t)^n}.$$
 Moreover, there exists a unique polynomial $h_{M}(t)\in \mathbb{Z}[t]$ with $h_{M}(1)\neq0$, which is called the $h$-{\it polynomial} of $M$, such that $HS(M,t)$ can be reduced to the following form by e.g. \cite[Theorem~5.1.4]{V1}:
\begin{equation*}\label{1.1}
  HS(M,t)=\frac{h_{M}(t)}{(1-t)^{\mbox{dim} (M)}}.
\end{equation*}
Here, $\mbox{dim} (M)$ denotes the Krull dimension of $M$.  Fix a monomial order $\prec$ on the ring
$R$. If $I$ is a graded ideal,
then the {\it initial ideal} of $I$, denoted by ${\rm in}_{\prec}(I)$, is the monomial ideal generated
by the initial terms of the polynomials belonging to $I$.  We  need the next result in following sections, see \cite[Corollary 2.5]{OHH02}.

\begin{Lemma}
\label{induced-bound}
Let $G$ be a simple graph and let $H$ be an induced subgraph of $G$. Then we have $\beta_{i,j}(I_H)\leq \beta_{i,j}(I_G)$ for all $i,j\geq 0$.
\end{Lemma}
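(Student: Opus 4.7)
The plan is to pass to the finer $\mathbb{Z}^{V(G)}$-multigrading on $S = \mathbb{K}[E]$ in which each variable $e = \{u, v\}$ receives the multidegree equal to the characteristic vector of $\{u, v\}$. The toric ideal $I_G$ is homogeneous in this grading, and the standard Betti number decomposes as $\beta_{i, j}(I_G) = \sum_{|\mathbf{a}| = 2j} \beta_{i, \mathbf{a}}(I_G)$, where $\mathbf{a}$ ranges over $\mathbb{Z}_{\geq 0}^{V(G)}$. Let $S' := \mathbb{K}[e : e \in E(H)] \subseteq S$, and let $\pi : S \to S'$ denote the multigraded retraction that kills every variable $e \notin E(H)$. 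I would compare $I_G$ and $I_H$ multidegree by multidegree through this retraction.

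The crux of the argument is the identity $\pi(I_G) = I_H$, equivalently $I_G \cap S' = I_H$. Since $\{f_W : W \in \mathcal{PC}(G)\}$ generates $I_G$, it is enough to examine $\pi(f_W)$ for each primitive even closed walk $W$. If every vertex of $W$ lies in $V(H)$, then the induced hypothesis forces every edge of $W$ to lie in $E(H)$, so that $f_W \in I_H$ directly. Otherwise some vertex $v$ of $W$ lies outside $V(H)$, and the two edges of $W$ incident to $v$ both lie in $E(G) \setminus E(H)$; by the alternating definition of $f_W$, these two edges appear on opposite sides of the binomial, so each of the two monomials of $f_W$ is divisible by a variable in $\ker \pi$, yielding $\pi(f_W) = 0$. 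Combining the two cases gives $\pi(I_G) \subseteq I_H$; the reverse inclusion is immediate since generators of $I_H$ are already in $S'$ and in $I_G$.

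To conclude, I would compute multigraded Tor by means of the Koszul complex $K(e_1, \ldots, e_r; S)$ resolving $\mathbb{K}$ over $S$. In a multidegree $\mathbf{a}$ supported on $V(H)$, only the Koszul basis elements indexed by subsets $T \subseteq \{e_1, \ldots, e_r\}$ lying inside $E(H)$ contribute to $(I_G \otimes_S K)_\mathbf{a}$ (any $e \notin E(H)$ would introduce a coordinate of $\mathbf{a}$ outside $V(H)$), and on those pieces $(I_G)_{\mathbf{a} - \mathbf{a}(T)} = (I_H)_{\mathbf{a} - \mathbf{a}(T)}$ by the previous step. The resulting complex agrees, term by term and differential by differential, with the Koszul computation of $\mathrm{Tor}_i^{S'}(I_H, \mathbb{K})_\mathbf{a}$, so $\beta_{i, \mathbf{a}}(I_G) = \beta_{i, \mathbf{a}}(I_H)$ whenever $\mathbf{a}$ is supported on $V(H)$; multidegrees not supported on $V(H)$ contribute nothing to the Betti numbers of $I_H$. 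Summing over $|\mathbf{a}| = 2j$ gives $\beta_{i, j}(I_H) \leq \beta_{i, j}(I_G)$. The substantive step is the identity $\pi(I_G) = I_H$, where the hypothesis that $H$ is induced is essential; the remainder is routine bookkeeping with multigraded Tor.
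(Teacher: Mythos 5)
Your proof is correct. Note that the paper does not prove Lemma~\ref{induced-bound} internally at all: it is quoted from \cite[Corollary 2.5]{OHH02}, where it is a special case of the fact that graded Betti numbers do not decrease when passing from a combinatorial pure subring to the ambient toric ring (an induced subgraph $H$ of $G$ makes $\mathbb{K}[H]$ a combinatorial pure subring of $\mathbb{K}[G]$). Your argument is essentially a self-contained reconstruction of that cited proof in the edge-ring setting: the $\mathbb{Z}^{V(G)}$-multigrading, the observation that a multidegree supported on $V(H)$ forces every monomial to involve only variables $e$ with $e\subseteq V(H)$, hence (by inducedness) only variables of $E(H)$, and the resulting strand-by-strand identification of the Koszul complexes computing $\mathrm{Tor}$. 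One phrase is loose: ``equivalently $I_G\cap S'=I_H$'' --- the contraction identity $I_G\cap S'=I_H$ holds for an arbitrary subgraph and does not by itself imply $\pi(I_G)=I_H$; what inducedness actually buys, and what your Koszul step really uses, is that $S_{\mathbf{a}}=S'_{\mathbf{a}}$ for every multidegree $\mathbf{a}$ supported on $V(H)$, whence $(I_G)_{\mathbf{a}}=(I_G\cap S')_{\mathbf{a}}=(I_H)_{\mathbf{a}}$. With that reading every step checks out (in the walk analysis, each occurrence of a vertex outside $V(H)$ contributes two consecutive edges, one to each monomial of $f_W$, so $\pi(f_W)=0$ as you claim), and your route even yields the sharper multigraded equality $\beta_{i,\mathbf{a}}(I_G)=\beta_{i,\mathbf{a}}(I_H)$ for all $\mathbf{a}$ supported on $V(H)$, from which the stated inequality follows by coarsening; the paper's citation is shorter but hides exactly where the induced hypothesis enters.
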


\section{monomial ideals which have regular quotients}

In this section, we present a generalization of linear quotients by introducing monomial ideals with regular quotients. We also delve into their homological aspects, including projective dimension and regularity, and provide computations of their Betti numbers in certain cases.

Let $R$ denote the standard graded polynomial ring $\mathbb{K}[x_1,\dots,x_n].$ For a monomial $\alpha$, the {\it support} of $\alpha$, denoted by $\mbox{supp}(\alpha)$, is the set of variables $x$ such that $x$ divides $\alpha$. It is known that a sequence of monomials $\alpha_1,\ldots,\alpha_r$ in $R$ is {\it regular} if and only if they have disjoint supports, namely, $\mbox{supp}(\alpha_i)\cap \mbox{supp}(\alpha_j)=\emptyset$ for all $1\leq i\neq j\leq r$. For the explicit definition of a regular sequence we refer to \cite[page 270]{HH}.

\begin{Proposition} \label{general} Suppose the monomial ideal $I$ is minimally generated by $f_1,\ldots,f_m$ such that for $k=2,\ldots,m$, $(f_1,\ldots,f_{k-1}):f_k$ is generated by a regular sequence $\alpha_{k,1}, \ldots, \alpha_{k,r_k}$ with $\deg(\alpha_{k,i})=a_{k,i}$, where $i=1,\ldots,r_k$. Denote by $d_k$ the degree of $f_k$ for $k=1,\ldots,m$.
Then \begin{enumerate}
       \item $HS(I,t)=\frac{1}{(1-t)^n}[t^{d_1}+\sum_{k=2}^m(1-t^{a_{k,1}})\cdots (1-t^{a_{k,r_k}}) t^{d_k}]$;
       \item $\mathrm{reg}(I)\leq \max\{ d_k+a_{k,1}+\cdots+a_{k,r_k}-r_k\:\;  2\leq k\leq m\}$;
       \item $\mathrm{Pdim}(I)\leq \max\{r_2,\ldots,r_m\}$.
     \end{enumerate}

     \begin{proof} Recall the known fact that if $0\rightarrow A\rightarrow B\rightarrow C\rightarrow 0$ is a short exact sequence of finitely generated graded $R$-modules, then $HS(B,t)=HS(A,t)+HS(C,t)$, $\mathrm{Pdim}(B)\leq \max\{\mathrm{Pdim}(A),\mathrm{Pdim}(C)\}$ and $\mathrm{reg}(B)\leq \max\{\mathrm{reg}(A),\mathrm{reg}(C)\}$.

      Put $I_k=(f_1,\ldots,f_{k})$ for $k=1,\ldots,m$ and $L_k=(f_1,\ldots,f_{k-1}):f_k$ for $k=2,\ldots,m$. It follows from the theory of Koszul homology, see e.g. \cite{HH},  that  for $k=2,\ldots,m$, we have \begin{enumerate}
       \item  $HS(R/L_k[-d_k],t)=[(1-t^{a_{k,1}})\cdots (1-t^{a_{k,r_k}})]/(1-t)^n$;
       \item $\mathrm{reg}(R/L_k[-d_k])=d_k+a_{k,1}+\cdots+a_{k,r_k}-r_k$;
       \item $\mathrm{Pdim}(R/L_k[-d_k])=r_k$.
     \end{enumerate}
    Note that $I_1$ is isomorphic to the free module $R[-d_1]$.  It is also clear that $r_2=1$ and $d_1\leq d_2+a_{2,1}-1$.  Hence, the result follows by applying the facts stated before to  the following short exact sequences: $$0\rightarrow I_{k-1}\rightarrow I_k\rightarrow R/L_k[-d_k]\rightarrow 0,$$ where  $k=2,\ldots,m.$ {\hfill $\square$\par}
     \end{proof}
\end{Proposition}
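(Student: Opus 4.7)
The plan is to study the ideal via its natural generator-filtration $I_1 \subset I_2 \subset \cdots \subset I_m = I$, where $I_k = (f_1,\ldots,f_k)$. For the base case, $I_1 = (f_1)$ is a free $R$-module isomorphic to $R[-d_1]$, contributing Hilbert series $t^{d_1}/(1-t)^n$, projective dimension $0$, and regularity $d_1$; these account for the ``$k=1$'' term in each of the three statements. For the inductive step ($k\geq 2$), the key observation is that $I_k/I_{k-1}$ is cyclic, generated by the image of $f_k$, with annihilator precisely $L_k = (f_1,\ldots,f_{k-1}):f_k$. Consequently $I_k/I_{k-1} \cong R/L_k[-d_k]$, and we obtain the short exact sequence
$$0 \to I_{k-1} \to I_k \to R/L_k[-d_k] \to 0.$$

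Next, I would exploit the hypothesis that $L_k$ is generated by a regular sequence $\alpha_{k,1},\ldots,\alpha_{k,r_k}$ with $\deg(\alpha_{k,i}) = a_{k,i}$. The Koszul complex on this regular sequence is then a minimal graded free resolution of $R/L_k$. Reading invariants directly from the Koszul complex gives $HS(R/L_k,t) = \prod_{i=1}^{r_k}(1-t^{a_{k,i}})/(1-t)^n$, projective dimension $r_k$, and regularity $\sum_{i=1}^{r_k} a_{k,i} - r_k$ (since the top homological position of the Koszul resolution sits in internal degree $\sum_i a_{k,i}$). Shifting by $-d_k$ multiplies the Hilbert series by $t^{d_k}$, leaves the projective dimension unchanged, and shifts the regularity to $d_k + \sum_i a_{k,i} - r_k$.

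Finally, I would assemble the argument by applying the three standard behaviors across a short exact sequence that are recalled at the start of the proof: additivity of Hilbert series, and subadditivity (via max) of projective dimension and of Castelnuovo--Mumford regularity. Iterating the short exact sequence above from $k=2$ up to $k=m$ yields (1) as an exact identity and (2), (3) as max-type inequalities. One minor bookkeeping point: for the bound in (2) to absorb the $k=1$ contribution $d_1$, one needs the inequality $d_1 \leq d_2 + a_{2,1} - 1$, which holds because $L_2 = f_1/\gcd(f_1,f_2)$ is a single monomial of degree $a_{2,1}$ that divides $f_1$.

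I do not anticipate a significant obstacle: the crux is the identification $I_k/I_{k-1} \cong R/L_k[-d_k]$, and the remaining work is a routine traversal of the filtration. The only subtlety worth double-checking is that the Koszul-based values of $\mathrm{reg}$ and $\mathrm{pdim}$ are genuinely extracted from a \emph{minimal} free resolution, which is guaranteed by the regularity of the sequence $\alpha_{k,1},\ldots,\alpha_{k,r_k}$.
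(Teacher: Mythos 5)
Your proposal is correct and follows essentially the same route as the paper: the filtration $I_k=(f_1,\ldots,f_k)$, the short exact sequences $0\to I_{k-1}\to I_k\to R/L_k[-d_k]\to 0$, the Koszul resolution of $R/L_k$ to read off Hilbert series, regularity and projective dimension, and the standard behavior of these invariants in short exact sequences. One small nitpick: your justification of $d_1\le d_2+a_{2,1}-1$ is slightly off --- knowing that $f_1:f_2$ divides $f_1$ only gives $a_{2,1}\le d_1$; the correct reason is that $\gcd(f_1,f_2)$ \emph{properly} divides $f_2$ (by minimality of the generating set), so $a_{2,1}=d_1-\deg\gcd(f_1,f_2)\ge d_1-d_2+1$.
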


 The upper bounds presented in Proposition~\ref{general} can be attained under a certain condition.  We will see an application of this result in Section 4.

\begin{Corollary} \label{2.5} Let $I$ be as in Proposition~\ref{general}, and let $Q(I)$ and  $P(I)$ denote upper bounds of $\mathrm{reg}(I)$
and $\mathrm{Pdim}(I)$ given in Proposition~\ref{general} respectively. Let $D(I)$ be the degree of the polynomial $(1-t)^nHS(I,t)$.
  Then $\mathrm{reg}(I)=Q(I)$ and  $\mathrm{Pdim}(I)=P(I)$ whenever $P(I)+Q(I)=D(I)$. Moreover, $\beta_{P(I), P(I)+Q(I)}(I)$ is the unique extremal Betti number of $I$ in this case.
\end{Corollary}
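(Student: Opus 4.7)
The plan is to exploit the K-polynomial $K(t) := (1-t)^n HS(I,t) = \sum_{i,j} (-1)^i \beta_{i,j}(I)\,t^j$, which is a polynomial of degree $D(I)$ by definition. First I would note that the bounds in Proposition~\ref{general} say $\beta_{i,j}(I)=0$ whenever $i > P(I)$ or $j-i > Q(I)$. Consequently every monomial $t^j$ appearing in $K(t)$ satisfies $j \leq i + Q(I) \leq P(I)+Q(I)$, so $D(I) \leq P(I)+Q(I)$ automatically, and the hypothesis $D(I)=P(I)+Q(I)$ is exactly the statement that this a priori maximal degree is attained.

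The central step is to isolate the top-degree coefficient of $K(t)$. If $\beta_{i,\,P(I)+Q(I)}(I) \neq 0$ for some $i$, then simultaneously $i \leq P(I)$ and $P(I)+Q(I) - i \leq Q(I)$ must hold, which forces $i = P(I)$. Hence the coefficient of $t^{P(I)+Q(I)}$ in $K(t)$ equals $(-1)^{P(I)}\beta_{P(I),\,P(I)+Q(I)}(I)$, with no cancellation. The assumption $D(I)=P(I)+Q(I)$ makes this coefficient nonzero, so $\beta_{P(I),\,P(I)+Q(I)}(I) \neq 0$. This immediately promotes the inequalities $\mathrm{Pdim}(I) \leq P(I)$ and $\mathrm{reg}(I) \leq Q(I)$ from Proposition~\ref{general} into equalities.

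For the uniqueness of the extremal Betti number, I would argue directly from the definition. Any nonzero $\beta_{i_0,\,i_0+j_0}(I)$ satisfies $i_0 \leq P(I)$ and $j_0 \leq Q(I)$. If $(i_0,j_0) \neq (P(I),Q(I))$, then at least one of these inequalities is strict while the other still holds as $\leq$, so the pair $(P(I),Q(I))$ dominates $(i_0,j_0)$ componentwise; since $\beta_{P(I),\,P(I)+Q(I)}(I) \neq 0$, this witnesses that $(i_0,j_0)$ is \emph{not} extremal. The same dominance argument, applied with the roles reversed, confirms that $(P(I),Q(I))$ itself is extremal (nothing with strictly larger homological and internal indices can be nonzero by the Pdim and reg bounds).

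I do not foresee a serious obstacle: the proof is an accounting exercise that squeezes information out of the top-degree term of the K-polynomial, using only the sharp vanishing ranges supplied by Proposition~\ref{general}. The only subtlety worth stating carefully is the no-cancellation observation that pins down a unique $i$ contributing to the coefficient of $t^{D(I)}$, since that is what converts a statement about Hilbert series (an alternating sum) into a statement about an individual Betti number.
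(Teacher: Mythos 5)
Your proposal is correct and takes essentially the same route as the paper: both arguments read everything off the identity $(1-t)^n HS(I,t)=\sum_{i,j}(-1)^i\beta_{i,j}(I)t^j$ together with the vanishing ranges $\beta_{i,j}(I)=0$ for $i>P(I)$ or $j-i>Q(I)$ supplied by Proposition~\ref{general}. The only (cosmetic) difference is that the paper squeezes via the chain $\mathrm{Pdim}(I)+\mathrm{reg}(I)\geq t(I)\geq D(I)$, where $t(I)$ is the maximal shift in the resolution, whereas you make the no-cancellation in the coefficient of $t^{P(I)+Q(I)}$ explicit and thereby obtain $\beta_{P(I),P(I)+Q(I)}(I)\neq 0$ first and the equalities as a consequence; the underlying content and the uniqueness argument for the extremal Betti number are the same.
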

\begin{proof} Let $t(I)=\max\{j\:\; \beta_{i,j}(I)\neq 0 \mbox{ for some } i\geq 0\}$. In general, we have $$\mathrm{Pdim}(I)+\mathrm{reg}(I)\geq t(I)\geq D(I).$$
 From the assumption  $P(I)+Q(I)=D(I)$ it follows  that $\mathrm{reg}(I)=Q(I)$ and $\mathrm{Pdim}(I)=P(I)$. Moreover, we have $\beta_{i,t(I)}(I)\neq 0$ for some $i\geq 0$. It is easy to conclude that $i=P(I)$ and the result follows. {\hfill $\square$\par}
\end{proof}

Computing the Betti numbers of monomial ideals with regular quotients is generally challenging. For the remainder of this section, we focus  on ideals where each colon ideal involved is generated in a single degree. The following example illustrates the abundance of such ideals.
\begin{Example}
Let $a_2\leq a_3\leq \cdots\leq a_m$ be an arbitrary increasing sequence of positive integers. Then there exist (square-free) monomials $f_1,\cdots,f_m$ such that
\begin{enumerate}
  \item $\deg(f_1)\leq \cdots \leq \deg(f_m)$;
  \item $(f_1,\cdots,f_{k-1}):f_k$ is generated by a regular sequence of a single degree $a_k$ for $k=2,\cdots,m$.
\end{enumerate}
\begin{proof}
By induction, there exist  monomials $g_1,\cdots,g_{m-1}$ such that $\deg(g_1)\leq \cdots \leq \deg(g_{m-1})$ and $(g_1,\ldots,g_{k-1}):g_k$ is generated by a regular sequence of a single degree $a_k$ for $k=2,\cdots,m-1$. Now, we take  monomials $u_1$ and $u_2$ with $\deg(u_1)=\deg(u_2)=a_m$ and  such that $\mathrm{supp}(u_1)\cap \mathrm{supp}(u_2)=\emptyset$,  $\mathrm{supp}(u_i)\cap \mathrm{supp}(g_j)=\emptyset$ for $i=1,2$ and $j=1,\dots,m-1$.  Then the monomials $f_1,\ldots,f_{m-1},f_m$  is what we want, where $f_1=u_1g_1,\cdots,f_{m-1}=u_1g_{m-1}, f_m=g_1\cdots g_{m-1}u_2$.{\hfill $\square$\par}
\end{proof}
\end{Example}

For the sake of convenience, we give the following definitions.

\begin{Definition}\em
Let $u_1,\ldots,u_r$ be a regular sequence of monomials. We call it a regular sequence of type $r$ and in degree $a$ or just a $(a,r)$-sequence if $\deg(u_1)=\cdots=\deg(u_r)=a$.
\end{Definition}

\begin{Definition}\em
Let $I$ be a monomial ideal of $R$ minimally generated by monomials $f_1,\ldots,f_s$, and $a$ a positive integer. We say it has $a$-regular quotients  if $(f_1,\ldots,f_{i-1}):f_i$ is generated by a regular sequence in degree $a$ for $i=2,\ldots,s$.

\end{Definition}

 The following lemma follows directly from  Koszul homology theory.

\begin{Lemma} \label{K} Let $L$ be an ideal generated by an $(a,r)$-sequence $\alpha_1,\ldots,\alpha_r$. Then $$\beta_{i,j}(R/L)=\left\{
                                                           \begin{array}{ll}
                                                             \begin{pmatrix}r\\i\end{pmatrix}, & \hbox{$j=ai,  i\geq 0$;} \\
                                                             0, & \hbox{otherwise.}
                                                           \end{array}
                                                         \right.
$$
\end{Lemma}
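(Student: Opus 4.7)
The plan is to invoke the Koszul complex of the regular sequence $\alpha_1,\ldots,\alpha_r$ and exploit the fact that all the generators share the common degree $a$.

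First I would recall that since $\alpha_1,\ldots,\alpha_r$ is a regular sequence in $R$, the Koszul complex $K_\bullet(\alpha_1,\ldots,\alpha_r;R)$ is acyclic with $H_0 = R/L$, so it is a free resolution of $R/L$. In the graded setting, with each $\alpha_k$ homogeneous of degree $a$, the $i$-th term of the Koszul complex is
\[
K_i \;=\; \bigwedge^{i} \Bigl(\bigoplus_{k=1}^{r} R[-a]\Bigr) \;\cong\; R[-ai]^{\binom{r}{i}},
\]
since every wedge of $i$ basis elements lives in internal degree $ai$, and there are $\binom{r}{i}$ such basis elements.

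Next I would verify that this resolution is minimal. The Koszul differential sends a basis element $e_{k_1}\wedge\cdots\wedge e_{k_i}$ to an alternating sum of terms of the form $\pm\alpha_{k_j}\,e_{k_1}\wedge\cdots\wedge\widehat{e_{k_j}}\wedge\cdots\wedge e_{k_i}$, and each $\alpha_{k_j}$ is a monomial of positive degree and hence lies in the homogeneous maximal ideal $\mathfrak{m}=(x_1,\ldots,x_n)$. Therefore the matrices representing the differentials have all entries in $\mathfrak{m}$, which is the standard criterion for minimality of a graded free resolution over $R$.

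From a minimal graded free resolution, the Betti numbers are read off directly as the ranks of the free modules in each internal degree. Thus $\beta_{i,j}(R/L)$ equals $\binom{r}{i}$ when $j=ai$ (for $0\le i\le r$, and automatically $0$ for $i>r$), and vanishes for all other values of $j$. There is no genuine obstacle here; the only point worth handling carefully is the explicit identification of the graded shifts in $K_i$, which follows at once from the fact that the $\alpha_k$ all have the same degree $a$. \qed
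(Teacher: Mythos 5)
Your argument is correct and is exactly the standard Koszul-complex reasoning the paper invokes: the paper gives no detailed proof, stating only that the lemma "follows directly from Koszul homology theory," and your identification of the graded shifts $K_i\cong R[-ai]^{\binom{r}{i}}$ together with minimality (entries of the differentials lie in $\mathfrak{m}$) fills in precisely that standard argument.
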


Let $I=(f_1,\ldots,f_m)$ be a monomial ideal  such that $(f_1,\ldots,f_{k-1}):f_k$ is generated by an $(a,r)$-regular sequence  for $k=2,\ldots,m$.   Denote $\deg(f_k)$ by $d_k$ for $k=1,\ldots,m$. By employing the theory of mapping cone, we can derive an upper bound for $\beta_{i,j}(I)$.

Let $\phi: A\rightarrow B$ be a homomorphism of complexes. Recall the mapping cone of $\phi$ is the complex
    $C(\phi)$ with $C(\phi)_i = B_i \oplus A_{i-1}$, and with  the chain map $d_i: C(\phi)_i\rightarrow C(\phi)_{i-1}, d_i(b,a)=(\phi_{i-1}(a)+\partial_i(b),-\partial_{i-1}(a))$  for all $i$. To apply this construction into  the short exact sequences: $$0\rightarrow R/L_k[-d_k]\rightarrow R/I_{k-1}\rightarrow R/I_k\rightarrow 0, \quad \forall\ k=2,\ldots,m$$
where $L_k=(f_1,\ldots,f_{k-1}):f_k$ and $I_k=(f_1,\ldots,f_k)$, we let $K^{(k)}$ be the Koszul complex for the regular sequence which generates  $L_k$,  $F^{(k-1)}$ be the graded free resolution of $R/I_{k-1}$, and $\phi^{(k)}:K^{(k)}[-d_k]\rightarrow F^{(k-1)}$ a graded complex homomorphism lifting $R/L_k[-d_k]\rightarrow R/I_{k-1}$. Then the mapping cone $C(\phi^{(k)})$ yields a graded free resolution of  $R/I_{k}$.
 Thus by iterated mapping cones we obtain step by step
a graded free resolution of $R/I$.

\begin{Theorem} \label{main1} \em Let $I$ be a monomial ideal generated by monomials $f_1,\ldots,f_m$. Denote $\deg(f_i)$ by $d_i$ for $i=1,\ldots,m$. Suppose that the colon ideal $(f_1,\ldots,f_{i-1}):f_i$ is generated by an $(a_i,r_i)$-regular sequence for each $i=2,\ldots,m$. Set $a_1=r_1=0$. Let $A_i$ denote the set $A_i=\{ia_p+d_p\:\; p=1,\ldots,m\}$  for $i\geq 0$, and  $K^i_j$  the set $\{1\leq k\leq m\:\; a_ki+d_k=j\}$ for all $i,j\geq 0$.  Then \begin{enumerate}
                    \item $\beta_{i,j}(I)\leq \left\{
                              \begin{array}{ll}
                                \sum\limits_{k\in K_j^i}  \begin{pmatrix}r_k\\i\end{pmatrix} , & \hbox{$j\in A_i$;} \\
                                0, & \hbox{$j\notin A_i$.}
                              \end{array}
                            \right.$

                    \item  The upper bound given in (1) is achieved if  the following condition holds:
 \begin{equation}\tag{$\clubsuit$}\label{condition} \begin{split} \mbox{ for all }  1\leq i\leq \max\{r_2,\ldots,r_{k}\}+1 \mbox{ and all } k=2,\ldots,m,\\ \mbox{ one has\quad } a_ki+d_k\notin \{a_{\ell}(i-1)+d_{\ell}\:\; 1\leq \ell \leq k-1\}.\quad
 \end{split}\end{equation}
                  \end{enumerate}

\end{Theorem}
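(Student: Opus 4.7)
The plan is to carry out the iterated mapping cone construction that is described immediately before the theorem statement. Set $F^{(1)}$ to be the minimal resolution $0 \to R[-d_1] \to R$ of $R/I_1$, and for each $k = 2,\dots,m$ define $F^{(k)} := C(\phi^{(k)})$, where $\phi^{(k)} : K^{(k)}[-d_k] \to F^{(k-1)}$ is any graded chain map lifting $R/L_k[-d_k] \hookrightarrow R/I_{k-1}$ (given by $1 \mapsto f_k$). Each $F^{(k)}$ is then a graded free resolution of $R/I_k$, and $F^{(m)}$ resolves $R/I$; the minimal Betti numbers of $I$ are therefore bounded above by those of $F^{(m)}$.

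For (1), I would prove by induction on $k$ the explicit shape $F^{(k)}_0 = R$ and, for $i \geq 1$,
\[
F^{(k)}_i \;=\; \bigoplus_{\ell=1}^{k} R[-(i-1)a_\ell - d_\ell]^{\binom{r_\ell}{i-1}},
\]
where the conventions $a_1 = r_1 = 0$ force the $\ell = 1$ summand to contribute only the copy of $R[-d_1]$ at $i=1$. The induction step is immediate from $C(\phi^{(k)})_i = F^{(k-1)}_i \oplus (K^{(k)}[-d_k])_{i-1}$ together with $(K^{(k)}[-d_k])_j = R[-ja_k - d_k]^{\binom{r_k}{j}}$. Shifting from $R/I$ to $I$, the homological degree $i$ part becomes $\bigoplus_{k=1}^{m} R[-ia_k - d_k]^{\binom{r_k}{i}}$, and collecting equal internal degrees gives the bound $\beta_{i,j}(I) \leq \sum_{k \in K^i_j} \binom{r_k}{i}$, which vanishes when $j \notin A_i$.

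For (2), the strategy is to show that under $(\clubsuit)$ every $F^{(k)}$ is minimal, which turns the inequality in (1) into an equality. A mapping cone $C(\phi^{(k)})$ is minimal provided $F^{(k-1)}$ is minimal (true by induction), the Koszul complex $K^{(k)}[-d_k]$ is minimal (its entries are the $\alpha_{k,j}$, of positive degree $a_k \geq 1$), and the off-diagonal block $\phi^{(k)}_{i-1} : (K^{(k)}[-d_k])_{i-1} \to F^{(k-1)}_{i-1}$ has entries in the maximal ideal for every $i \geq 1$. By the shape of $F^{(k-1)}$ established above, the source has generators in degree $(i-1)a_k + d_k$ while the target has generators in degrees $(i-2)a_\ell + d_\ell$ for $\ell = 1,\dots,k-1$; a unit entry can arise only if these two degrees coincide, which after substituting $i' := i-1$ is exactly the equality $a_k i' + d_k = a_\ell(i'-1) + d_\ell$ ruled out by $(\clubsuit)$ on the range $1 \leq i' \leq \max\{r_2,\dots,r_k\}+1$. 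This range covers every $i'$ for which source and target are simultaneously nonzero, while the boundary case $i=1$ is handled separately by observing that $\phi^{(k)}_0$ is multiplication by $f_k$, whose only entry has positive degree $d_k$. The main obstacle I expect is bookkeeping: keeping the shift between $i$ (in the mapping cone) and $i'=i-1$ (in $(\clubsuit)$) straight, and verifying the edge cases (the $R[-d_1]$ summand at $i=1$ encoded by $a_1 = r_1 = 0$, and the $i=1$ case controlled by $\phi^{(k)}_0$); once these are settled, the absence of unit cancellations in the iterated mapping cones yields $\beta_{i,j}(I) = \sum_{k \in K^i_j} \binom{r_k}{i}$ as claimed.
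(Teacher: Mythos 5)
Your proposal is correct, and part (1) is essentially the paper's argument (reading the bound off the iterated mapping cone), but your proof of part (2) takes a genuinely different route. The paper never checks minimality of the mapping cones: it argues by induction on $m$ using the long exact sequence in $\mathrm{Tor}$ coming from $0\to I_{m-1}\to I_m\to R/L_m[-d_m]\to 0$, and uses $(\clubsuit)$ together with the Koszul computation (Lemma~\ref{K}) to kill the two boundary terms $\mathrm{Tor}_{i-1}(I_{m-1},\mathbb{K})_j$ and $\mathrm{Tor}_{i+1}(R/L_m[-d_m],\mathbb{K})_j$ in the relevant internal degrees, so that $\beta_{i,j}(I_m)=\beta_{i,j}(I_{m-1})+\beta_{i,j}(R/L_m[-d_m])$; no comparison maps $\phi^{(k)}$ are ever analyzed. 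You instead prove that under $(\clubsuit)$ each graded lift $\phi^{(k)}$ has all entries in the maximal ideal, because a unit entry would force a coincidence of generator degrees $i'a_k+d_k=a_\ell(i'-1)+d_\ell$ with $1\le i'\le r_k$ and $\ell<k$, which $(\clubsuit)$ forbids (its range $i'\le\max\{r_2,\dots,r_k\}+1$ is even larger than the $i'\le r_k$ you need, since the Koszul source vanishes beyond $r_k$); your bookkeeping of the shift $i'=i-1$, the $i'=0$ block (multiplication by $f_k$), and the $\ell=1$ summand via $a_1=r_1=0$ is all sound. The trade-off: your argument yields strictly more, namely an explicit minimal graded free resolution of $R/I$ by iterated mapping cones (in the spirit of Herzog--Takayama), while the paper's $\mathrm{Tor}$-sequence argument is lighter, working only with numerical vanishing and sidestepping the need to know that graded lifts can be chosen and that degree-zero entries are the only obstruction to minimality.
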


\begin{proof}

(1) Based on the preceding discussion, it can be concluded that \begin{equation*}\begin{split} \beta_{i,j}(R/I_m)&\leq \beta_{i-1,j}(R/L_m[-d_m])+\beta_{i,j}(R/I_{m-1})\\ &\leq \cdots\\ &\leq \beta_{i-1,j}(R/L_m[-d_m])+\cdots+\beta_{i-1,j}(R/L_2[-d_2])+\beta_{i,j}(R/(f_1)).\end{split}\end{equation*}

Hence,  \begin{equation*}\begin{split} \beta_{i,j}(I)\leq \beta_{i,j}(R/L_m[-d_m])+\cdots+\beta_{i,j}(R/L_2[-d_2])+\beta_{i,j}(R[-d_1]).                   \end{split}\end{equation*}
The latter is equal to the upper bound given above by Lemma~\ref{K} and so the result follows.

(2) We proceed by induction on $m$. If $m=1$, then $I\cong R[-d_1]$ and so $$\beta_{i,j}(I)=\left\{
                                                                                                    \begin{array}{ll}
                                                                                                      1, & \hbox{$i=0,j=d_1$;} \\
                                                                                                      0, & \hbox{otherwsie.}
                                                                                                    \end{array}
                                                                                                  \right.
$$ Note that $A_i=\{d_1\}$ for all $i\geq 0$, it is easy to check that  the desired formula  is true when $m=1$.

Now, assume $m\geq 2$.   By induction hypothesis, one has $$\beta_{i,j}(I_{m-1})=\left\{
                              \begin{array}{ll}
                                \sum\limits_{k\in \underline{K}^i_j}  \begin{pmatrix}r_k\\i\end{pmatrix}, & \hbox{$j\in B_i$;} \\
                                0, & \hbox{$j\notin B_i$.}
                              \end{array}
                            \right.
$$ Here, $B_i=\{a_pi+d_p\:\; p=1,\ldots,m-1\}$ for all $i\geq 0$ and $\underline{K}^i_j=K^i_j\setminus\{m\}$ for all $i,j\geq 0$. On the other hand,   the following  short exact sequence: $$0\rightarrow I_{m-1}\rightarrow I_m\rightarrow R/L_m[-d_m]\rightarrow 0$$  induces the  long exact sequence:
\begin{equation}
\label{short-exact}
\cdots\rightarrow \mathrm{Tor}_{i+1}^R(R/L_m[-d_m], \mathbb{K})_j\rightarrow \mathrm{Tor}_{i}^R(I_{m-1}, \mathbb{K})_j\rightarrow \mathrm{Tor}_{i}^R(I_{m}, \mathbb{K})_j$$$$\rightarrow \mathrm{Tor}_{i}^R(R/L_m[-d_m], \mathbb{K})_j\rightarrow \mathrm{Tor}_{i-1}^R(I_{m-1}, \mathbb{K})_j\rightarrow \cdots
\end{equation}
Fix $i\geq 0$. If $j\notin A_i$, then $j\notin B_i$ and $j\neq ia_m+d_m$,  and so  $\mathrm{Tor}_{i}^R(I_{m-1}, \mathbb{K})_j=\mathrm{Tor}_{i}^R(R/L[-d_m], \mathbb{K})_j=0$ by induction and by Lemma~\ref{K} respectively. This implies $\mathrm{Tor}_{i}^R(I_{m}, \mathbb{K})_j=0$ and so $\beta_{i,j}(I)=\beta_{i,j}(I_m)=0$. If $j\in A_i$, we consider the following cases.

\noindent {\it Case when $j=ia_m+d_m$}: In this case, $K^i_j=\underline{K}^i_j\cup \{m\}$. In view of condition (\ref{condition}), we have either $j\notin B_{i-1}$ or $i>\max\{r_1,\ldots,r_{m}\}+1$, and thus $\mathrm{Tor}_{i-1}^R(I_{m-1}, \mathbb{K})_j=0$. Note that $\mathrm{Tor}_{i+1}^R(R/L_m[-d_m], \mathbb{K})_j=0$ by Lemma~\ref{K}, it follows that $$\beta_{i,j}(I)=\beta_{i,j}(I_{m-1})+\begin{pmatrix}r_m\\i\end{pmatrix}=\sum\limits_{k\in K^i_j}\begin{pmatrix}r_k\\i\end{pmatrix}.$$

\noindent {\it Case when $j\neq ia_m+d_m$}: In this case, $K^i_j=\underline{K}^i_j$. Since $j\in A_i$, we have either $j\neq (i+1)a_m+d_m$ or $i>r_{m}+1$, according to  (\ref{condition}). It follows that $\mathrm{Tor}_{\ell}^R(R/L_m[-d_m], \mathbb{K})_j=0$ for $\ell\in \{i,i+1\}$ by Lemma~\ref{K}. Hence, $$\beta_{i,j}(I)=\beta_{i,j}(I_{m-1})=\sum\limits_{k\in \underline{K}^i_j}\begin{pmatrix}r_k\\i\end{pmatrix}=\sum\limits_{k\in K^i_j}\begin{pmatrix}r_k\\i\end{pmatrix}.$$ This completes the proof.{\hfill $\square$\par}
\end{proof}

\begin{Remark} \em Note that if $j\notin A_i$, then $K^i_j=\emptyset$, and in particular,  $\sum\limits_{k\in K_j^i}  \begin{pmatrix}r_k\\i\end{pmatrix}=0$. Hence, the conclusion in Theorem~\ref{main1}.(2) may be  reformulated  as:
$$\beta_{i,j}(I)= \sum\limits_{k\in K_j^i}  \begin{pmatrix}r_k\\i\end{pmatrix}.$$
\end{Remark}

\begin{Remark} \label{r=1} \em If $r_2=r_3=\cdots=r_m=1$, then condition (\ref{condition}) can be removed from Theorem~\ref{main1}. Since   $\beta_{i,j}(I)=0$ for all $i, j$ with $i\geq 2$, it suffices to prove the formula in Theorem~\ref{main1}.(2) holds both when  $i=0$ and when $i=1$. We show this again by induction on $m$.  The case when $i=0$ follows by noting that $$\beta_{0,j}(I_m)-\beta_{0,j}(I_{m-1})=\left\{
                                                                                                            \begin{array}{ll}
                                                                                                              1, & \hbox{$j=d_m$;} \\
                                                                                                              0, & \hbox{otherwise.}
                                                                                                            \end{array}
                                                                                                          \right.
 $$
On the other hand,  since $\mathrm{Tor}_2^R(R/L_m[-d_m],\mathbb{K})_j=0$  for any $j\geq 0$, in view of the long exact sequence  (\ref{short-exact}),
 we have \begin{equation*}\begin{split}\beta_{1,j}(I_m)=\beta_{1,j}(I_{m-1})+\beta_{1,j}(R/L_m[-d_m]) +\beta_{0,j}(I_m)-\beta_{0,j}(I_{m-1})-\beta_{0,j}(R/L_m[-d_m]).\end{split}\end{equation*} From this it follows that $\beta_{1,j}(I_m)=\beta_{1,j}(I_{m-1})+\beta_{1,j}(R/L_m[-d_m])$, completing the proof.
\end{Remark}

Let us see two special cases of Theorem~\ref{main1}, which we are interested in.

\begin{Corollary} \label{main11} Let $I$ be a monomial ideal generated by monomials $f_1,\ldots,f_m$. Denote $\deg(f_i)$ by $d_i$ for $i=1,\ldots,m$. Suppose  the colon ideal $(f_1,\ldots,f_{k-1}):f_k$ is generated by an $(a_k,r_k)$-regular sequence for each $k=2,\ldots,m$. Moreover, if one of the following conditions is satisfied:

\begin{enumerate}
  \item  $a_2=\ldots=a_m=a$ and  $d_{\ell}-d_{k}\neq a$ for $k\geq 2$ and $1\leq\ell \leq  k-1$;

  \item  $a_1\leq a_2\leq \cdots\leq a_m$ and $d_1\leq d_2\leq \cdots\leq d_m$;
\end{enumerate}

then the conclusion in Theorem~\ref{main1}.(2) holds.

\end{Corollary}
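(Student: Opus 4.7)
The plan is to deduce Corollary~\ref{main11} by the same iterated mapping-cone argument that proves Theorem~\ref{main1}.(2): at each step $k$, the short exact sequence $0 \to I_{k-1} \to I_k \to R/L_k[-d_k] \to 0$ induces a long exact Tor sequence, and one must verify that both connecting homomorphisms vanish in each bidegree $(i,j)$, so that the sequence splits and yields the inductive formula $\beta_{i,j}(I_k) = \beta_{i,j}(I_{k-1}) + \beta_{i-1,j}(R/L_k[-d_k])$. For convenience I will verify condition~(\ref{condition}) directly under hypothesis (2), and under hypothesis (1) argue the required Tor vanishings by hand.

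Under (2): Rewrite the desired inequality from (\ref{condition}) as
\[
a_k i + d_k - \bigl(a_\ell(i-1) + d_\ell\bigr) = (a_k - a_\ell)\,i + a_\ell + (d_k - d_\ell).
\]
All three summands on the right are nonnegative because $a_\ell \leq a_k$ and $d_\ell \leq d_k$. If $\ell \geq 2$ then $a_\ell \geq 1$, so the sum is $\geq 1$; if $\ell = 1$ then $a_\ell = 0$ but $(a_k - a_\ell) i = a_k i \geq 1$ since $k \geq 2$ forces $a_k \geq 1$. Hence (\ref{condition}) holds and Theorem~\ref{main1}.(2) applies.

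Under (1): Set $a := a_2 = \cdots = a_m$. For $\ell \geq 2$, (\ref{condition}) collapses to $d_\ell - d_k \neq a$, the hypothesis. For $\ell = 1$, however, (\ref{condition}) asks $d_1 - d_k \neq a i$ for every $i$ in the relevant range, whereas the hypothesis rules out only $i = 1$; thus (\ref{condition}) is not guaranteed. Instead, I argue that the specific Tor vanishings used in the proof of Theorem~\ref{main1}.(2) nevertheless hold. By the inductive form of the formula,
\[
\mathrm{Tor}_{i-1}^R(I_{k-1},\mathbb{K})_{a i + d_k} = \sum_{\substack{p \leq k-1 \\ a_p(i-1)+d_p = a i + d_k}} \binom{r_p}{i-1}.
\]
The $p=1$ term equals $\binom{r_1}{i-1}=\binom{0}{i-1}$, which vanishes for $i \geq 2$; for $i = 1$ its presence would force $d_1 - d_k = a$, excluded by hypothesis. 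For $p \geq 2$, a contribution would require $a(i-1)+d_p = a i + d_k$, i.e.\ $d_p - d_k = a$, also excluded. An entirely analogous calculation shows $\mathrm{Tor}_i^R(I_{k-1},\mathbb{K})_{(i+1)a+d_k} = 0$, which is the Tor relevant to the second connecting map. With both vanishings in place the long exact sequence splits and the induction of Theorem~\ref{main1}.(2) runs verbatim.

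The main obstacle is the subtle observation that condition (\ref{condition}) is sufficient but not necessary in case (1): the convention $r_1 = 0$ forces $\binom{r_1}{i-1} = 0$ for all $i \geq 2$, which automatically annihilates precisely the potentially troublesome terms coming from $\ell = 1$. Hence the hypothesis $d_\ell - d_k \neq a$ suffices without any additional structural bound on $d_1 - d_k$, and the corollary follows.
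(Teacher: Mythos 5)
Your proof is correct, but in case (1) it takes a genuinely different route from the paper. The paper's proof is a single sentence asserting that each of hypotheses (1) and (2) implies condition (\ref{condition}); your direct check under (2) is exactly that argument, but under (1) you rightly point out that the hypothesis $d_\ell-d_k\neq a$ only excludes the value $a\cdot 1$, whereas (\ref{condition}) with $\ell=1$ (where $a_1=0$) would also demand $d_1-d_k\neq ai$ for all $2\leq i\leq \max\{r_2,\ldots,r_k\}+1$, which is not a formal consequence of the hypothesis. Rather than trying to force (\ref{condition}), you rerun the mapping-cone induction of Theorem~\ref{main1}.(2) and verify the only two Tor vanishings where (\ref{condition}) is used: any offending contribution in degree $ai+d_k$ (resp.\ $(i+1)a+d_k$) either comes from some $p\geq 2$ and forces the excluded equality $d_p-d_k=a$, or comes from $p=1$ and is weighted by $\binom{r_1}{i-1}=\binom{0}{i-1}$ (resp.\ $\binom{0}{i}$), which vanishes except when $i=1$ (resp.\ $i=0$), cases again excluded by $d_1-d_k\neq a$. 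What your route buys is robustness: the paper's one-line argument would need to know that a configuration $d_1-d_k=ai$ with $i\geq 2$ cannot actually occur for ideals with $a$-regular quotients (or would need precisely your patch), while your argument is indifferent to whether such configurations are realizable. One small wording point: in the subcase $j=(i+1)a+d_k\neq ia+d_k$ the long exact sequence does not so much ``split'' as squeeze $\mathrm{Tor}_i(I_k,\mathbb{K})_j$ between the two zero modules $\mathrm{Tor}_i(I_{k-1},\mathbb{K})_j$ and $\mathrm{Tor}_i(R/L_k[-d_k],\mathbb{K})_j$, giving $\beta_{i,j}=0$, which is exactly the value predicted by the formula; this is what your computation establishes, so the conclusion stands.
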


\begin{proof} This is because if one of the conditions mentioned above is satisfied then condition (\ref{condition}) in Theorem~\ref{main1} holds true.{\hfill $\square$\par}
\end{proof}

The inspiration for the following result originates from  \cite{SJZ10}.  It suggests if $I$ is a monomial ideal with $a$-regular quotients,  we may rearrange the order of the generators of $I$ to make their degrees more closely approximate a monotonically increasing pattern, while preserving the quotient ideals unchanged. If $a=1$, we recover \cite[Lemma 2.1 and Proposition 2.2]{SJZ10}.

\begin{Lemma} \label{re} Let $I$ be a monomial ideal having regular quotients with respect to the order $f_1,\ldots,f_m$ of minimal generators of $I$ such that all the quotient ideals are generated in the same degree $a$. Then there is a permutation $\sigma$ on $[n]$ with $\sigma(1)=1$ such that  for any  $i\in [n]$, one has $$(f_{\sigma(1)},\ldots,f_{\sigma(i-1)}):f_{\sigma(i)}=(f_1,\ldots,f_{\sigma(i)-1}):f_{\sigma(i)}.$$  Moreover, $\deg(f_{\sigma(i)})\leq \deg(f_{\sigma(i+1)})+a-1$ for $i=1,\ldots,m-1$.
\end{Lemma}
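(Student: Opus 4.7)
I would build $\sigma$ by an adjacent-transposition procedure on permutations of $\{1,\dots,m\}$ fixing $1$, starting from the identity, maintaining throughout the invariant
\[
(f_{\tau(1)},\dots,f_{\tau(i-1)}):f_{\tau(i)} \;=\; (f_1,\dots,f_{\tau(i)-1}):f_{\tau(i)} \quad\text{for every } i\in\{1,\dots,m\}.
\]
Whenever the current $\tau$ admits an index $i\geq 2$ with $\deg(f_{\tau(i)})>\deg(f_{\tau(i+1)})+a-1$, I transpose $\tau(i)$ and $\tau(i+1)$; when no such index remains, I set $\sigma:=\tau$. Note that the invariant forces the colon on the left to be generated by a regular sequence of degree $a$ (since the right-hand side is, by hypothesis), so after termination the invariant gives the first conclusion, and the swap criterion gives the degree inequality for $i\geq 2$.

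\textbf{The swap lemma.} Writing $g_j=f_{\tau(j)}$ and $L=(g_1,\dots,g_{i-1})$, the key step is: if the invariant holds for $\tau$ and $\deg(g_i)\geq \deg(g_{i+1})+a$, then transposing $g_i$ and $g_{i+1}$ preserves the invariant. The central observation is that $\lcm(g_i,g_{i+1})\in L$. Indeed,
\[
(L+(g_i)):g_{i+1} \;=\; L:g_{i+1}+(g_i/\gcd(g_i,g_{i+1})),
\]
and by the old invariant this colon is generated by monomials $h_1,\dots,h_s$ of degree $a$; for any such $h_j$, either $h_j g_{i+1}\in L$ or $g_i\mid h_j g_{i+1}$, but the second alternative would yield a monomial $v$ with $\deg v=a+\deg g_{i+1}-\deg g_i\leq 0$, hence $v=1$ and $g_{i+1}\mid g_i$, contradicting the minimality of the generators. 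Therefore $L:g_{i+1}=(L+(g_i)):g_{i+1}$, so $g_i/\gcd(g_i,g_{i+1})\in L:g_{i+1}$, i.e.\ $\lcm(g_i,g_{i+1})\in L$. The analogous decomposition $(L+(g_{i+1})):g_i = L:g_i+(g_{i+1}/\gcd(g_i,g_{i+1}))$ then collapses to $L:g_i$ for the same reason. Thus the new colon at position $i$ equals the old colon at position $i+1$, and the new colon at position $i+1$ equals the old colon at position $i$; combined with the old invariant (and the fact that at positions $\neq i,i+1$ the prefix set, hence the generated ideal, is unchanged), the new permutation satisfies the invariant.

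\textbf{Termination and the $i=1$ case.} Each swap strictly decreases the tuple $(\deg g_1,\dots,\deg g_m)$ in lexicographic order, and these tuples lie in the finite set of orderings of the multiset of original degrees, so the procedure halts. At halting, $\deg f_{\sigma(i)}\leq \deg f_{\sigma(i+1)}+a-1$ for all $i\geq 2$. For $i=1$, the invariant at position $2$ reads $(f_1):f_{\sigma(2)}=(f_1,\dots,f_{\sigma(2)-1}):f_{\sigma(2)}$, which by hypothesis is generated by a regular sequence of degree $a$; since the left-hand side is the principal ideal $(f_1/\gcd(f_1,f_{\sigma(2)}))$, its generator has degree $a$, so $\deg f_1=a+\deg\gcd(f_1,f_{\sigma(2)})\leq \deg f_{\sigma(2)}+a-1$, using $f_{\sigma(2)}\nmid f_1$ by minimality.

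\textbf{Main obstacle.} The technical heart is the swap lemma, specifically the realization that the strict degree gap combined with minimality forces $\lcm(g_i,g_{i+1})\in L$, producing a symmetric cancellation in both colon computations. Everything else is bookkeeping that extends the $a=1$ argument of \cite[Lemma 2.1 and Proposition 2.2]{SJZ10} to arbitrary $a$.
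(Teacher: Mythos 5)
Your proposal is correct, and it reaches the lemma by a genuinely different route than the paper. The paper proceeds by induction on $m$: having arranged $f_1,\dots,f_{m-1}$ so that consecutive degrees differ by at most $a-1$, it inserts $f_m$ in a single block move just after the last $f_j$ with $\deg(f_j)\leq\deg(f_m)+a-1$, and then checks colon preservation in two steps: first, $(f_1,\dots,f_j):f_m=(f_1,\dots,f_{m-1}):f_m$ because each $f_p:f_m$ with $p>j$ has degree $\geq a+1$ while the colon is generated in degree $a$ (this is the same observation as your first collapse, in block form); second, for every displaced $p$ it shows $f_m:f_p\in(f_1,\dots,f_{p-1}):f_p$ by choosing a degree-$a$ generator $\alpha=f_k:f_m$ dividing $f_p:f_m$ and then, via a two-case analysis on $\deg(f_k:f_p)$ using disjointness of supports, producing a degree-$a$ generator $\beta$ of $(f_1,\dots,f_{p-1}):f_p$ that divides $f_m:f_p$. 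You instead decompose the reordering into adjacent transpositions and prove one symmetric swap lemma, whose heart is the membership $\lcm(g_i,g_{i+1})\in(g_1,\dots,g_{i-1})$ forced by degree-$a$ generation plus the gap $\deg g_i\geq\deg g_{i+1}+a$ (which rules out $g_i\mid h\,g_{i+1}$ by minimality); this replaces the paper's $\alpha,\beta$ support argument with a shorter one, at the cost of an explicit termination argument (lexicographic decrease of the degree tuple), which the paper does not need since its insertion is a single move inside the induction on $m$. You also make explicit the $i=1$ inequality, via $(f_1):f_{\sigma(2)}$ being principal yet generated in degree $a$, a point the paper leaves implicit. Both arguments rest on the same underlying mechanism -- degree-$a$ generation of the quotients together with a degree gap of at least $a$ prevents any cross-divisibility that could alter a colon -- but your decomposition and key lemma are different from, and arguably cleaner than, the paper's insertion-plus-case-analysis.
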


\begin{proof}  By induction, we may assume  $\deg(f_i)\leq \deg(f_{i+1})+(a-1)$ for $i=1,\ldots,m-2$. If $\deg(f_{m-1})\leq \deg(f_m)+a-1$, we are done. Suppose now $\deg(f_{m-1})\geq \deg(f_m)+a$. Then, there is an integer $1\leq j\leq m-1$ such that $\deg(f_j)\leq \deg(f_m)+a-1$ and $\deg(f_p)\geq \deg(f_m)+a$ for $p=j+1,\ldots,m-1$.  We only  need to prove $f_1,\ldots, f_j,f_m,f_{j+1},\ldots, f_{m-1}$ is the order we require. In the following, we denote $u:v$ as the monomial $u/(u,v)$, where $(u,v)$ represents the greatest common divisor of $u$ and $v$.

Since   $\deg(f_p:f_m)\geq a+1$ for $p=j+1,\ldots,m-1$,  it follows that $(f_1,\ldots,f_j):f_m=(f_1,\ldots,f_{m-1}):f_m$. It remains to show that for all $p$ with $ j+1\leq p\leq m-1$, one has $$(f_1,\ldots,f_j, f_m, f_{j+1},\ldots,f_{p-1}):f_p=(f_1,\ldots,f_j, f_{j+1},\ldots,f_{p-1}):f_p.$$
This is equivalent to showing that $f_m:f_p\in (f_1,\ldots,f_j, f_{j+1},\ldots,f_{p-1}):f_p.$ To do this, we first note that there is an integer $1\leq k\leq j$
such that $\deg(f_k:f_m)=a$ and $f_k:f_m$ divides $f_p:f_m$. Put $\alpha=f_k:f_m.$  There are two cases to consider:

  The first case is when $\deg(f_k: f_p)>a$. In this case,  there is $1\leq \ell \leq j$ such that $\deg(f_{\ell}:f_p)=a$ and $f_{\ell}:f_p$ divides $f_k:f_p$. Put $\beta=f_{\ell}:f_p$. Then, for any variable $x\in \mathrm{supp} (\alpha)$, we have
$$\deg_x(f_k)=\deg_x(f_m)+\deg_{x}\alpha,  \quad \deg_x(f_p)\geq \deg_x(f_m)+\deg_{x}\alpha.$$
This implies that $\deg_x(f_k:f_p)=0$, and so $x\notin \mathrm{supp}(\beta)$.  Thus, $$\mathrm{supp}(\alpha)\cap \mathrm{supp}(\beta)=\emptyset.$$

Take any variable $y\in  \mathrm{supp}(\beta)$. Then, we have $\deg_{y}(f_m) \geq \deg_y(f_k)$, since  $\mathrm{supp}(\alpha)\cap \mathrm{supp}(\beta)=\emptyset$ as well as $\alpha=f_k:f_m.$  On the other hand, since $\beta$ divides $f_k:f_p$, it follows that  $\deg_y(f_k)\geq \deg_y(f_p)+\deg_y(\beta)$.  Hence, $\deg_{y}(f_m)\geq \deg_y(f_p)+\deg_y(\beta)$, and this implies $\beta$ divides $f_m:f_p$. In particular,  $f_m:f_p$ belongs to $(f_1,\ldots,f_j, f_{j+1},\ldots,f_{p-1}):f_p$,  as  desired.

The second case is when $\deg(f_k: f_p)=a$. In this case, we put $\beta=f_k: f_p$. Now, the remaining proof  is similar to the one in the first case and so we omit it.{\hfill $\square$\par}
\end{proof}

\begin{Example}
\em Consider the monomial ideal $$I=(f_1=x_1x_2x_3x_4x_5x_6, f_2=x_1x_2x_3x_4x_7x_8x_9, f_3=x_3x_4x_5x_6x_7, f_4=x_5x_6x_7x_8).$$
It has $2$-regular quotients with respect to the given order, where the quotients, one by one, are $(x_5x_6), (x_1x_2), (x_3x_4)$. It also  has 2-regular quotients with respect to the order $f_1,f_3,f_4,f_2$, where the quotient ideals, in sequence, are $(x_1x_2), (x_3x_4), (x_5x_6)$, as claimed by Lemma~\ref{re}.  Since every quotient ideal is generated by a single element, we obtain the following Betti table of $I$ by Remark~\ref{r=1}:
\[
\begin{array}{c|cc}
    & 0 & 1 \\
\hline
  4 & 1 & 0 \\
  5 & 1 & 1 \\
  6 & 1 & 1 \\
  7 & 1 & 0 \\
  8 & 0 & 1
\end{array}.
\]

It is interesting to see that $I$ has regular quotients with respect to many other orders. For examples:

With respect to the degree increasing order $f_4,f_3,f_1,f_2$,  $I$ has $(1,1,2)$-regular quotients. More precisely, $f_4:f_3=(x_8), (f_4,f_3):f_1=(x_7)$ and $(f_4,f_3,f_1):f_2=(x_5x_6)$.

With respect to the order $f_1,f_4,f_3,f_2$, the ideal $I$ has regular quotients too:  $(f_4):f_1=(x_7x_8)$, $(f_4,f_1):f_3=(x_1x_2,x_8)$, and $(f_4,f_1,f_3):f_2=(x_5x_6)$. This shows  the upper bounds given in Proposition~\ref{general} may not be achieved.

With respect to the degree decreasing  order $f_2,f_1,f_3,f_4$, the ideal $I$ has regular quotients again: $(f_2):f_1=(x_7x_8x_9)$. $(f_2,f_1):f_3=(x_1x_2)$, $(f_2,f_1,f_3):f_4=(x_3x_4)$.
\end{Example}

\section{A bridge theorem}

In this section, we establish a condition that guarantees the equality of Betti numbers between a toric ideal of an edge ring and its initial ideal. This condition will be utilized in subsequent sections.

\begin{Lemma} \label{pure} Let $I$ be a homogeneous ideal of $R$ such that ${\mathrm{in}}_{\prec}(I)$ has a pure resolution for some monomial order $\prec$. Then $\beta_{i,j}(I)=\beta_{i,j}(\mathrm{in}_{\prec}(I))$ for all $i,j\geq 0$.
\begin{proof}
It was shown in \cite[Corollary~22.13]{P} that if ${\rm{in}}_{\prec}(I)$ has a linear resolution, then $\beta_{i,j}(I)=\beta_{i,j}({\rm{in}}_{\prec}(I))$ for all $i,j\geq 0$. The same proof  applies to the case of pure resolution.{\hfill $\square$\par}
\end{proof}
\end{Lemma}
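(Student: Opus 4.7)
The plan is to combine the standard upper semicontinuity of Betti numbers under passage to initial ideals, namely $\beta_{i,j}(I) \leq \beta_{i,j}(\mathrm{in}_{\prec}(I))$ for all $i,j \geq 0$, with the classical invariance of the Hilbert series, $HS(I,t) = HS(\mathrm{in}_{\prec}(I),t)$. These two facts are precisely what drive the linear-resolution version of the statement cited from Peeva's book, and I expect they suffice here once the geometric structure of a pure resolution is taken into account.

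First I would fix notation for the pure resolution of $\mathrm{in}_{\prec}(I)$: there is a strictly increasing sequence $d_0 < d_1 < \cdots < d_p$ such that $\beta_{i,j}(\mathrm{in}_{\prec}(I))$ vanishes unless $j = d_i$. The strictness $d_i < d_{i+1}$ follows from minimality of the free resolution, since each differential has entries in the maximal ideal and therefore raises degree by at least one. Combined with the upper bound above, this forces $\beta_{i,j}(I) = 0$ whenever $j \neq d_i$, so the Betti table of $I$ is automatically supported on the same sparse set of positions $(i,d_i)$, $0 \leq i \leq p$.

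Next I would apply the Hilbert series formula
$$HS(M,t) = \frac{\sum_{i,j}(-1)^i \beta_{i,j}(M)\,t^j}{(1-t)^n}$$
to both $M = I$ and $M = \mathrm{in}_{\prec}(I)$. Equality of the two left-hand sides forces equality of the numerators, and after the reduction of the previous paragraph this becomes
$$\sum_{i=0}^p (-1)^i \beta_{i,d_i}(I)\,t^{d_i} \;=\; \sum_{i=0}^p (-1)^i \beta_{i,d_i}(\mathrm{in}_{\prec}(I))\,t^{d_i}.$$
Since the exponents $d_0 < d_1 < \cdots < d_p$ are pairwise distinct, I can compare coefficients of each $t^{d_i}$ separately and conclude $\beta_{i,d_i}(I) = \beta_{i,d_i}(\mathrm{in}_{\prec}(I))$ for every $i$, finishing the proof.

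There is essentially no hard step in this argument: it is a direct extension of the linear-resolution case, the key point being that a pure resolution concentrates all Betti numbers on a monotone (hence injective) set of bidegrees, so no cancellation can occur in the alternating sum of the Hilbert numerator. The only subtlety worth verifying carefully is the strictness $d_i < d_{i+1}$, which I would justify by noting that a differential of degree zero between nonzero free summands would contradict minimality of the resolution.
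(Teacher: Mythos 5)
Your argument is correct and is essentially the same as the paper's, which simply invokes the proof of Peeva's Corollary 22.13: upper semicontinuity of graded Betti numbers under passing to the initial ideal together with invariance of the Hilbert series, with purity guaranteeing the shifts $d_0<d_1<\cdots<d_p$ are pairwise distinct so that no cancellation in the alternating numerator can mask a discrepancy. Your explicit check that minimality forces $d_i<d_{i+1}$ is the only point the paper's citation leaves implicit, and you handle it correctly.
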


\begin{Theorem}
\label{bridge}
Let $G$ be a simple graph and $s$ a positive integer. Assume that $\mathcal{PC}(G)$  can be arranged  as  $\{C_1, \ldots, C_r\}$ such that the following conditions are satisfied:
 \begin{enumerate}
\item With respect to  some monomial order $\prec$, the monomial ideal $\mathrm{in}_{\prec}(I_G)$ has $s$-regular quotients with respect to the order $u_1,u_2,\ldots,u_r$;
Here $u_i=\mathrm{in}_{\prec}(f_{C_i})$ for $i=1,\ldots,r$.
\item There are pairwise distinct  positive integers $d_1,\ldots,d_m$   such that \begin{equation*}\begin{split}\deg(u_1)=\cdots=& \deg(u_{k_1})=d_1, \deg(u_{k_1+1})=\cdots=\deg(u_{k_2})=d_2,\\ &\cdots, \cdots, \deg(u_{k_{m-1}+1})=\cdots=\deg(u_{k_m})=d_m\end{split}\end{equation*} for some $k_i$'s with $k_m=r$ and such that there exists an induced subgraph $G_i$ of $G$ satisfying $$\mathcal{PC}(G_i)=\{C_{\ell}\:\; 1\leq \ell\leq k_i\}$$ for each $i=1,2,\ldots,m$;
      \item $d_k-d_{\ell}\neq s$ for any pair $k,\ell$ with $1\leq k<\ell\leq m$.
\end{enumerate}
Then $\beta_{i,j}(I_G)=\beta_{i,j}(\mathrm{in}_{\prec}(I_G))$ for all $i,j\geq 0$.
\end{Theorem}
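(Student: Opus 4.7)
The plan is to prove by induction on $p \in \{1, \ldots, m\}$ that
$$\beta_{i,j}(I_{G_p}) = \beta_{i,j}(J_p) \qquad \text{for all } i, j \geq 0,$$
where $J_p := \mathrm{in}_{\prec}(I_{G_p})$; the instance $p = m$ is the theorem. To set notation: $\{f_W : W \in \mathcal{PC}(G_p)\}$ is a (universal) Gr\"obner basis of $I_{G_p}$ by Proposition~\ref{primitive}, and $\mathcal{PC}(G_p) = \{C_1, \ldots, C_{k_p}\}$ by hypothesis~(2), so $J_p = (u_1, \ldots, u_{k_p})$; this ideal inherits $s$-regular quotients from hypothesis~(1), involving only the distinct block degrees $d_1, \ldots, d_p$. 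Hypothesis~(3) is precisely the separation condition required by Corollary~\ref{main11}(1), which therefore yields
$$\beta_{i,j}(J_p) = \sum_{k \in K^i_j,\, k \leq k_p} \binom{r_k}{i}.$$

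For the base case $p = 1$, all generators of $J_1$ have degree $d_1$, so the formula shows $\beta_{i,j}(J_1)$ is supported on the single strand $j = si + d_1$; thus $J_1$ has a pure resolution and Lemma~\ref{pure} gives $\beta(I_{G_1}) = \beta(J_1)$. For the inductive step ($p \geq 2$), assuming the result at stage $p-1$, I would combine at each bidegree $(i,j)$ the sandwich
$$\beta_{i,j}(J_{p-1}) \;=\; \beta_{i,j}(I_{G_{p-1}}) \;\leq\; \beta_{i,j}(I_{G_p}) \;\leq\; \beta_{i,j}(J_p),$$
whose left equality is the induction hypothesis, whose middle inequality is Lemma~\ref{induced-bound} applied to the induced subgraph $G_{p-1} \subset G_p$, and whose right inequality is the standard Betti upper-semicontinuity under Gr\"obner deformation. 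Setting $\epsilon_{i,j} := \beta_{i,j}(I_{G_p}) - \beta_{i,j}(J_{p-1})$ and $\Delta_{i,j} := \beta_{i,j}(J_p) - \beta_{i,j}(J_{p-1})$ gives $0 \leq \epsilon_{i,j} \leq \Delta_{i,j}$, and the Hilbert series identity $HS(R/I_{G_p}, t) = HS(R/J_p, t)$, together with its stage-$(p-1)$ counterpart, yields degree by degree
$$\sum_i (-1)^i \epsilon_{i,j} \;=\; \sum_i (-1)^i \Delta_{i,j} \qquad \text{for every } j.$$

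The crux is the single-support property of $\Delta_{i,j}$: since
$$\Delta_{i,j} = \sum_{k_{p-1} < k \leq k_p} \binom{r_k}{i}\bigl[\, d_k = j - si \,\bigr]$$
and every generator in block $p$ has the common degree $d_p$, a nonzero $\Delta_{i,j}$ forces $i = (j - d_p)/s$, which is uniquely determined (when it is a non-negative integer). Hence $\epsilon_{i,j} \leq \Delta_{i,j} = 0$ for every $i$ except possibly the unique $i_0 := (j - d_p)/s$, and the Euler identity collapses to $(-1)^{i_0} \epsilon_{i_0, j} = (-1)^{i_0} \Delta_{i_0, j}$, forcing $\epsilon_{i_0, j} = \Delta_{i_0, j}$ and completing the induction.

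I expect this single-support observation to be the only non-routine step. In the plan, hypothesis~(3) plays its role indirectly by unlocking Corollary~\ref{main11}(1); the essential input is hypothesis~(2), which simultaneously delivers the chain of induced subgraphs that makes Lemma~\ref{induced-bound} available and the block structure that forces the newly introduced generators at each stage to share one and the same degree.
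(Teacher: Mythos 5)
Your proposal is correct and follows essentially the same route as the paper's proof: induction along the chain $G_1,\ldots,G_m$ of induced subgraphs, the pure-resolution base case via Lemma~\ref{pure}, the sandwich combining the induction hypothesis, Lemma~\ref{induced-bound} and upper-semicontinuity of Betti numbers under Gr\"obner degeneration, the Betti formula from Corollary~\ref{main11}, and the equality of Hilbert series of $I_{G_p}$ and its initial ideal to force the remaining equalities. The only difference is bookkeeping: the paper first closes the degrees $j=is+d_k$ with $k\leq m-1$ and then solves for the single unknown on the strand $j=is+d_m$ from the Hilbert series, whereas you run the per-degree Euler identity uniformly using that $\Delta_{i,j}$ is supported at the unique $i_0=(j-d_p)/s$ --- the same ingredients, arranged slightly more compactly.
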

\begin{proof}  Observe that $\mathrm{in}_{\prec}(I_{G_i})=(u_1,\ldots, u_{k_i})$ for $i=1,2,\ldots,m$ and that we may regard $G_i$ as an induced subgraph of $G_{i+1}$ for $i=1,\ldots,m-1$. In the sequel,   we use induction on $m$. If $m=1$, then $\beta_{i,j}(\mathrm{in_{<}}(I_G))=0$ when $j\neq si+d_1$ by Theorem~\ref{main1}. It follows that  $\mathrm{in}_{\prec}(I_G)$ has a pure resolution, and so we are done by Lemma~\ref{pure}.

Suppose  $m>1$. Note that $G=G_m$.  By induction hypothesis, we have $\beta_{i,j}(I_{G_{m-1}})=\beta_{i,j}(\mathrm{in}_{\prec}(I_{G_{m-1}}))$ for all $i,j\geq 0$.  Additionally,  by  Theorem~\ref{main1}.1 as well as \cite[Corollary 3.3.3]{HH}, it follows that $\beta_{i,j}(I_G)=0$ for all $j\notin \{is+d_k\:\; k=1,\ldots,m\}$.

Fix $i\geq 0$ and let $j\in \{is+d_k\:\; k=1,\ldots,m-1\}$. Since $d_1,\ldots,d_m$ are pairwise distinct, $\beta_{i,j}(\mathrm{in}_{\prec}(I_{G_{m-1}}))=\beta_{i,j}(\mathrm{in}_{\prec}(I_{G_m}))$ by Corollary~\ref{main11}. Based on this and by utilizing Lemma~\ref{induced-bound}, it can be inferred that
$$\beta_{i,j}(I_G)\geq \beta_{i,j}(I_{G_{m-1}})=\beta_{i,j}(\mathrm{in}_{\prec}(I_{G_{m-1}}))=\beta_{i,j}(\mathrm{in}_{\prec}(I_{G_m}))\geq \beta_{i,j}(I_G).$$ This implies that
\begin{equation} \label{m-1} \beta_{i,j}(I_G)=\beta_{i,j}(\mathrm{in}_{\prec}(I_{G})) \mbox{  for all } i\geq 0 \mbox{ and } j\in \{is+d_k\:\; k=1,\ldots,m-1\}.
\end{equation}

It remains to show that $\beta_{i,j}(I_G)=\beta_{i,j}(\mathrm{in}_{\prec}(I_{G}))$  whenever $j=is+d_m$. To do this, we fix $i\geq 0$ and let  $j=is+d_m$. By putting $A:=\{k\geq 0\:\; ks+d_t=j \mbox{\ for some\ } 1\leq t\leq m-1\}$, we have the  coefficient of $t^j$ in $(1-t)^nHS(I_G, t)$ is equal to $$(-1)^i\beta_{i,j}(I_G)+\sum_{k\in A}(-1)^k\beta_{k,j}(I_G),$$
and the  coefficient of $t^j$ in $(1-t)^nHS(\mathrm{in}_{\prec}(I_G), t)$ is equal to $$(-1)^i\beta_{i,j}(\mathrm{in}_{\prec}(I_G))+\sum_{k\in A}(-1)^k\beta_{k,j}(\mathrm{in}_{\prec}(I_G)).$$
     From (\ref{m-1}), it follows that   $$\sum_{k\in A}(-1)^k\beta_{k,j}(I_G)=\sum_{k\in A}(-1)^k\beta_{k,j}(\mathrm{in}_{\prec}(I_G)).$$ By combining these factors, we obtain $\beta_{i,j}(I_G)=\beta_{i,j}(\mathrm{in}_{\prec}(I_G))$, as desired.{\hfill $\square$\par}
\end{proof}
\section{Bipartite graphs $B_{\underline{\ell},h}$ and their homological invariants}
Let $h\geq 2$ be an integer, and  $\underline{\ell}=(\ell_1, \ldots, \ell_h)$  a vector in $\mathbb{Z}_{>0}^h$. In this section, we introduce a special family of bipartite graphs denoted by $B_{\underline{\ell},h}$. We then compute the regularity and projective dimension of the edge ring $\mathbb{K}[B_{\underline{\ell},h}]$ in a general setting. Additionally, we provide an explicit formula for the Betti numbers of the toric ideal of $B_{\underline{\ell},h}$ in the case when $\ell_2=\cdots=\ell_h$.

\begin{Definition}\em Let $h\geq 2$ be an integer and let  $\underline{\ell}=(\ell_1, \ldots, \ell_h)$ be a vector in $\mathbb{Z}_{>0}^h$. We define $B_{\underline{\ell},h}$ to be a bipartite graph consisting of $h$ paths of lengths $2\ell_1,\ldots,2\ell_h$ respectively, all originating from a shared vertex and terminating at another common vertex, see Fig. 1. More precisely,
$B_{\underline{\ell},h}$ is the bipartite graph  with  vertex set
$$V_{\underline{\ell},h}=\{y_1,y_2\}\cup \{x_{i,j} \:\; 1\leq i\leq 2\ell_k-1, 1\leq k,j\leq  h \}$$
and with edge    set
\begin{equation*}
  \begin{split}
     E_{\underline{\ell},h} & =\{\{y_1,x_{1,1}\},\ldots,\{y_1,x_{1,h}\}\}\cup \{\{x_{i,j},x_{i+1,j}~|~1\leq i \leq 2\ell_k-2, 1\leq k,j \leq h\}\} \\
       & \quad \cup \{\{y_2,x_{2\ell_1,1}\},\ldots,\{y_2,x_{2\ell_h,h}\}\}.
   \end{split}
\end{equation*}
\end{Definition}
\noindent We label the edges of $B_{\underline{\ell},h}$ as follows: $e_{1,j}=\{y_1,x_{1,j}\},e_{i,j}=\{x_{i-1,j},x_{i,j}\},e_{2\ell,j}=\{y_2,x_{2\ell,j}\}$, for $i\in \{2,\ldots,2\ell_k-1\}$ and $k,j\in \{1,\ldots,h\}.$

\begin{figure}[htbp]%
  \label{Figure1}  \centering

        \begin{tikzpicture}[thick, scale=1.0, every node/.style={scale=0.8}]]
        \fill (0,0) circle (.07);
        \fill (10,0) circle (.07);
        \fill (2,3) circle (.07);
        \fill (4,3) circle (.07);
        \fill (6,3) circle (.07);
        \fill (8,3) circle (.07);
        \fill (2,2) circle (.07);
        \fill (4,2) circle (.07);
        \fill (6,2) circle (.07);
        \fill (8,2) circle (.07);
        \fill (2,1) circle (.07);
        \fill (4,1) circle (.07);
        \fill (6,1) circle (.07);
        \fill (8,1) circle (.07);
        \fill (2,-1) circle (.07);
        \fill (4,-1) circle (.07);
        \fill (6,-1) circle (.07);
        \fill (8,-1) circle (.07);
        \fill (8,-2) circle (.07);
        \fill (2,-2) circle (.07);
        \fill (4,-2) circle (.07);
        \fill (6,-2) circle (.07);
        \fill (8,-2) circle (.07);
        \fill (2,-3) circle (.07);
        \fill (4,-3) circle (.07);
        \fill (6,-3) circle (.07);
        \fill (8,-3) circle (.07);
        \draw[thick] (0,0) -- (2,3);
        \draw[thick] (2,3) -- (4,3);
        \draw[thick,dashed] (4,3) -- (6,3);
        \draw[thick] (6,3) -- (8,3);
        \draw[thick] (8,3) -- (10,0);
        \draw[thick] (0,0) -- (2,2);
        \draw[thick] (2,2) -- (4,2);
        \draw[thick,dashed] (4,2) -- (6,2);
        \draw[thick] (6,2) -- (8,2);
        \draw[thick] (8,2) -- (10,0);
        \draw[thick] (0,0) -- (2,1);
        \draw[thick] (2,1) -- (4,1);
        \draw[thick,dashed] (4,1) -- (6,1);
        \draw[thick] (6,1) -- (8,1);
        \draw[thick] (8,1) -- (10,0);
        \draw[thick] (0,0) -- (2,-1);
        \draw[thick] (2,-1) -- (4,-1);
        \draw[thick,dashed] (4,-1) -- (6,-1);
        \draw[thick] (6,-1) -- (8,-1);
        \draw[thick] (8,-1) -- (10,0);
        \draw[thick] (0,0) -- (2,-2);;
        \draw[thick] (2,-2) -- (4,-2);
        \draw[thick,dashed] (4,-2) -- (6,-2);
        \draw[thick] (6,-2) -- (8,-2);
        \draw[thick] (8,-2) -- (10,0);
        \draw[thick] (0,0) -- (2,-3);
        \draw[thick] (2,-3) -- (4,-3);
        \draw[thick,dashed] (4,-3) -- (6,-3);
        \draw[thick] (6,-3) -- (8,-3);
        \draw[thick] (8,-3) -- (10,0);
        \draw[very thick,dotted] (3,0.4) -- (3,-0.4);
        \draw[very thick,dotted] (7,0.4) -- (7,-0.4);
        \draw (1.3,1.8) node[anchor=south east]{$e_{1,1}$};
        \draw (1.5,1) node[anchor=south east]{$e_{1,2}$};
        \draw (1.5,0.5) node[anchor=south east]{$e_{1,3}$};
        \draw (1.8,-0.5) node[anchor=south east]{$e_{1,h-2}$};
        \draw (1.8,-1.3) node[anchor=south east]{$e_{1,h-1}$};
        \draw (1.3,-2) node[anchor=south east]{$e_{1,h}$};
        \draw (3,3) node[anchor=south]{$e_{2,1}$};
        \draw (3,2) node[anchor=south]{$e_{2,2}$};
        \draw (3,1) node[anchor=south]{$e_{2,3}$};
        \draw (3,-1) node[anchor=north]{$e_{2,h-2}$};
        \draw (3,-2) node[anchor=north]{$e_{2,h-1}$};
        \draw (3,-3) node[anchor=north]{$e_{2,h}$};
        \draw (7,3) node[anchor=south]{$e_{2\ell_1-1,1}$};
        \draw (7,2) node[anchor=south]{$e_{2\ell_2-1,2}$};
        \draw (7,1) node[anchor=south]{$e_{2\ell_3-1,3}$};
        \draw (7,-1) node[anchor=north]{$e_{2\ell_{h-2}-1,h-2}$};
        \draw (7,-2) node[anchor=north]{$e_{2\ell_{h-1}-1,h-1}$};
        \draw (7,-3) node[anchor=north]{$e_{2\ell_{h}-1,h}$};
        \draw (8.7,1.8) node[anchor=south west]{$e_{2\ell_1,1}$};
        \draw (8.2,1.3) node[anchor=south west]{$e_{2\ell_2,2}$};
        \draw (8.2,0.7) node[anchor=south west]{$e_{2\ell_3,3}$};
        \draw (8.2,-0.5) node[anchor=south west]{$e_{2\ell_{h-2},h-2}$};
        \draw (8.2,-1.3) node[anchor=south west]{$e_{2\ell_{h-1},h-1}$};
        \draw (8.7,-2.1) node[anchor=south west]{$e_{2\ell_h,h}$};
        \draw (0,0) node[anchor=east]{$y_1$};
        \draw (10,0) node[anchor=west]{$y_2$};
        \draw (5,-3.5) node[anchor=north]{Fig. 1: The graph $B_{\underline{\ell},h}$};

        \end{tikzpicture}

\end{figure}

Let $\prec$ denote the pure  lexicographic monomial order on the polynomial ring $R:=\mathbb{K}[e_{i,j}\:\; 1\leq i\leq 2\ell_j, 1\leq j\leq h]$,
where the variables are ordered in the following way:
\begin{equation*} e_{i_1,j_1}\succ  e_{i_2,j_2}\Longleftrightarrow \left\{
                                                                     \begin{array}{ll}
                                                                    \mbox{ either }  i_1 \mbox{ is odd  and } i_2 \mbox{ is even}  \\
                                                                     \mbox{or } i_1-i_2=0\ (\mod 2) \mbox { and } i_1<i_2\\
                                                                     \mbox{or } i_1=i_2 \mbox { and } j_1<j_2.
                                                                     \end{array}
                                                                   \right.
\end{equation*}

For all $1\leq i<j\leq h$, we set
 $$u_{i,j}=e_{1,i}e_{3,i}\cdots e_{2\ell_i-1,i}e_{2,j}e_{4,j}\cdots e_{2\ell_j,j}$$
and $$v_{i,j}=e_{1,j}e_{3,j}\cdots e_{2\ell_j-1,j}e_{2,i}e_{4,i}\cdots e_{2\ell_i,i}.$$
Note that both $u_{i,j}$ and $v_{i,j}$ are monomials of degree $\ell_i+\ell_j$, and $v_{i,j}\prec u_{i,j}$ with respect to the order given above.
\begin{Lemma}
 The universal Gr\"{o}bner basis of $I_{B_{\underline{\ell},h}}$ is given by
$$\mathcal{G}_{\ell,h}=\{u_{i,j}-v_{i,j}\:\; 1\leq i<j \leq h\}.$$
In particular, the minimal generating set of $\mathrm{in}_{\prec}(I_{B_{\underline{\ell},h}})$  is
 $$\{u_{i,j}\:\; 1\leq i<j\leq h\}.$$
\begin{proof}

According to \cite[Lemma~3.2]{OH} (or see Proposition~\ref{primitive}), we have the set of primitive even closed walks of $B_{\underline{\ell},h}$ is
 $$\mathcal{PC}(B_{\underline{\ell},h})=\{W_{i,j}=e_{1,i},e_{2,i},\ldots,e_{2\ell_i,i},e_{2\ell_j,j},\ldots,e_{2,j},e_{1,j}\:\; 1\leq i<j\leq h \}.$$
 Note that $f_{W_{i,j}}=u_{i,j}-v_{i,j}$ for all $i,j$, it follows that $\{u_{i,j}-v_{i,j}\:\; 1\leq i<j \leq h\}$ is the universal  Gr\"{o}bner basis of $I_{B_{\underline{\ell},h}}$ by e.g. \cite[Proposition~5.19]{EH}. Now, the last assertion follows since $v_{i,j}\prec u_{i,j}$ for all $i,j$ with respect to the given order.{\hfill $\square$\par}
\end{proof}
\end{Lemma}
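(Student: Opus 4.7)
The plan is to identify the primitive even closed walks of $B_{\underline{\ell},h}$ explicitly, apply Proposition~\ref{primitive} together with the standard fact that the set of primitive binomials forms a universal Gr\"obner basis of a toric ideal (cited via \cite[Proposition~5.19]{EH}), and then verify the leading term of each primitive binomial under the order $\prec$.

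First, since $B_{\underline{\ell},h}$ is bipartite it contains no odd cycle, so neither case (2) nor case (3) of Proposition~\ref{primitive} can arise and every primitive even closed walk must be an even cycle. Next, I would describe the cycles of $B_{\underline{\ell},h}$ directly from its structure: the graph is the union of $h$ internally disjoint paths of even lengths $2\ell_1,\ldots,2\ell_h$ sharing only the endpoints $y_1$ and $y_2$, so any cycle must traverse exactly two of these paths, say path $i$ followed by path $j$ in reverse with $1\leq i<j\leq h$. This yields the enumeration $\mathcal{PC}(B_{\underline{\ell},h})=\{W_{i,j}\:\; 1\leq i<j\leq h\}$, and a direct substitution into the definition of $f_W$ gives $f_{W_{i,j}}=u_{i,j}-v_{i,j}$. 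Hence $\mathcal{G}_{\ell,h}$ is the universal Gr\"obner basis of $I_{B_{\underline{\ell},h}}$.

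Finally, I would verify the inequality $u_{i,j}\succ v_{i,j}$ with respect to $\prec$. The monomial $u_{i,j}$ contains the odd-row variables $e_{1,i},e_{3,i},\ldots,e_{2\ell_i-1,i}$ together with the even-row variables $e_{2,j},e_{4,j},\ldots,e_{2\ell_j,j}$, while $v_{i,j}$ interchanges the roles of the indices $i$ and $j$. Under $\prec$ every odd-row variable dominates every even-row variable, so the lexicographically largest variable occurring in either monomial is the larger of $e_{1,i}$ and $e_{1,j}$. Since $i<j$, the tie-breaking rule in the definition of $\prec$ gives $e_{1,i}\succ e_{1,j}$; hence $e_{1,i}$ appears in $u_{i,j}$ but not in $v_{i,j}$, whence $u_{i,j}\succ v_{i,j}$. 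This identifies the minimal generating set of $\mathrm{in}_{\prec}(I_{B_{\underline{\ell},h}})$ as $\{u_{i,j}\:\; 1\leq i<j\leq h\}$, as claimed. No single step is genuinely difficult; the only place requiring any care is the enumeration of the even cycles, which is forced by the internal disjointness of the $h$ constituent paths.
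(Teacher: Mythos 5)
Your proposal is correct and follows essentially the same route as the paper: invoke Proposition~\ref{primitive} (i.e.\ \cite[Lemma~3.2]{OH}) to identify $\mathcal{PC}(B_{\underline{\ell},h})$ with the walks $W_{i,j}$, conclude via \cite[Proposition~5.19]{EH} that $\{u_{i,j}-v_{i,j}\}$ is the universal Gr\"obner basis, and read off the initial terms from $u_{i,j}\succ v_{i,j}$. The only difference is that you spell out details the paper leaves implicit (bipartiteness excluding cases (2)--(3), the cycle enumeration forced by the internally disjoint paths, and the explicit leading-term comparison via $e_{1,i}\succ e_{1,j}$), all of which are verified correctly.
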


It is clear that \begin{equation}\label{4.1}\begin{split}& u_{h-1,h}\prec u_{h-2,h}\prec u_{h-2,h-1}\prec \cdots \prec u_{i,h}\prec \cdots \prec u_{i,i+1}\\ &\prec  u_{i-1,h}\prec \cdots \prec  u_{i-1,i} \prec \cdots\prec u_{1,h}\prec \cdots \prec u_{1,2}.
\end{split}
\end{equation}

We now show that $\mathrm{in}_{\prec}(I_{B_{\underline{\ell},h}})$ has regular quotients with respect to the above order. To state the result more simply, we label the monomials in (\ref{4.1}) one by one: $$m_1=u_{h-1,h}, \quad m_2=u_{h-2,h},\quad  \ldots, \quad m_{\frac{h(h-1)}{2}}=u_{1,2}.$$

\begin{Proposition}
\label{main3} The initial ideal  $\mathrm{in}_{\prec}(I_{B_{\underline{\ell},h}})$ has regular quotients with respect to the above order. More precisely, the colon ideal $(m_1,\ldots, m_{k-1}):m_k$ is generated by a  regular sequence of length $r_k$ for $k=2,\ldots,\frac{h(h-1)}{2}$, where
$$(r_2,\ldots,r_{\frac{h(h-1)}{2}})=(\underbrace{{1,1}}_{2},\underbrace{2,2,2}_{3},\ldots,\underbrace{h-2,h-2,\ldots,h-2}_{h-1}).$$
\begin{proof} By putting  $Y_{1,i}=e_{1,i}e_{3,i}\cdots e_{2\ell_i-1,i}$ and  $Y_{2,j}=e_{2,j}e_{4,j}\cdots e_{2\ell_j,j}$, we may write $$u_{i,j}=Y_{1,i}Y_{2,j}$$ for $1\leq i<j\leq h.$  Fix $2\leq k\leq \frac{h(h-1)}{2}$, then there exist unique $i,j$ with $1\leq i<j\leq h$ such that $m_{k-1}=u_{i,j}$. Let $L_k$ denote the colon ideal $(m_1,\ldots, m_{k-1}):m_k$.

If $j-i\geq 3$, then  $$L_k=(Y_{1,h-1}Y_{2,h}, \ldots, Y_{1,i}Y_{2,j}): Y_{1,i}Y_{2,j-1}=(Y_{1,j-2},\ldots,Y_{1,i+1}, Y_{2,j} \ldots, Y_{2,h})$$ and so it is generated by a regular sequence of length $h-i-1$.

If $j-i= 2$, then  $$L_k=(Y_{1,h-1}Y_{2,h}, \ldots, Y_{1,i}Y_{2,j}): Y_{1,i}Y_{2,i+1}=(Y_{2,i+2} \ldots, Y_{2,h})$$ and so it is generated by a regular sequence of length $h-i-1$.

If $j-i=1$, then $i\geq 2$, and   $$L_k=(Y_{1,h-1}Y_{2,h}, \ldots, Y_{1,i}Y_{2,j}): Y_{1,i-1}Y_{2,h}=(Y_{1,h-1}, Y_{1,h-2}, \ldots, Y_{1,i})$$ and so it is generated by a regular sequence of length $h-i$.

The  facts above  show that  if $m_{k-1}=u_{i,j}$ then $$r_k=\left\{
                                                                          \begin{array}{ll}
                                                                            h-i-1, & \hbox{$j-i\geq 2$  and $i\geq 1$;} \\
                                                                            h-i, & \hbox{$j-i= 1$ and $i\geq 2$.}
                                                                          \end{array}
                                                                        \right.
$$
From this  the last assertion of this result follows.{\hfill $\square$\par}
\end{proof}
\end{Proposition}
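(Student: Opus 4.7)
The plan is to exploit the factorization $u_{i,j}=Y_{1,i}Y_{2,j}$ introduced in the proof, together with the key fact that the $2h$ monomials $Y_{1,1},\ldots,Y_{1,h},Y_{2,1},\ldots,Y_{2,h}$ have pairwise disjoint supports: distinct $Y$'s on the same path pick out odd- versus even-indexed edges, while $Y$'s on different paths live on disjoint vertex sets. Consequently any subfamily of these monomials automatically forms a regular sequence in $R$, so the entire problem reduces to identifying which such $Y$'s generate each colon $L_k=(m_1,\ldots,m_{k-1}):m_k$.

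First I would read off the successor rule from the ordering in (\ref{4.1}). Since $m_1\prec m_2\prec\cdots$ lists the $u_{i,j}$ by decreasing first index $i$ in blocks and, within each block, by decreasing second index $j$, the element immediately following $m_{k-1}=u_{i,j}$ is $m_k=u_{i,j-1}$ when $j-i\geq 2$, and $m_k=u_{i-1,h}$ when $j=i+1$. Writing $m_k=u_{i_0,j_0}$, the predecessors $m_1,\ldots,m_{k-1}$ are precisely the $u_{i',j'}$ with $i'>i_0$ together with those $u_{i_0,j'}$ with $j'>j_0$.

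Next I would compute $L_k$ term by term. By disjointness of supports, a predecessor $u_{i',j'}=Y_{1,i'}Y_{2,j'}$ contributes $Y_{1,i'}Y_{2,j'}$ when $\{i',j'\}\cap\{i_0,j_0\}=\emptyset$, and the shorter generator $Y_{2,j'}$ or $Y_{1,i'}$ when exactly one coordinate matches. The two-factor generators turn out to be redundant: such a predecessor has either $j'>j_0$ (in which case $Y_{2,j'}\in L_k$ already) or $i_0<i'<j'<j_0$ (in which case $Y_{1,i'}\in L_k$ already). Hence the minimal generating set of $L_k$ collapses to
\[\{Y_{2,j}\:\; j_0<j\leq h\}\cup\{Y_{1,i}\:\; i_0<i<j_0\},\]
which is a family of monomials with pairwise disjoint supports, and therefore a regular sequence.

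Finally, counting sizes gives $r_k=(h-j_0)+(j_0-1-i_0)=h-i_0-1$ in every case (when $j_0=h$ the first bracket vanishes; when $j_0=i_0+1$ the second one does). Translating back to the notation of the statement, if $m_{k-1}=u_{i,j}$ with $j-i\geq 2$ then $(i_0,j_0)=(i,j-1)$, yielding $r_k=h-i-1$; if $m_{k-1}=u_{i,i+1}$ then $(i_0,j_0)=(i-1,h)$, yielding $r_k=h-i$. Reading this off in the order dictated by the successor rule produces the announced blocks $\underbrace{1,1}_{2},\underbrace{2,2,2}_{3},\ldots,\underbrace{h-2,\ldots,h-2}_{h-1}$. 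The main delicacy is the containment argument certifying that the two-factor generators are always absorbed by simpler ones; everything else reduces to index bookkeeping.
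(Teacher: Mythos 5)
Your proof is correct and takes essentially the same approach as the paper: both rest on the factorization $u_{i,j}=Y_{1,i}Y_{2,j}$ together with the pairwise disjointness of the supports of the monomials $Y_{1,1},\ldots,Y_{1,h},Y_{2,1},\ldots,Y_{2,h}$, and both identify each colon ideal $L_k$ as generated by a subset of these $Y$'s, hence by a regular sequence of the stated length. The only difference is organizational: you index by $m_k=u_{i_0,j_0}$ and give one uniform description of $L_k$ (with an explicit check that the two-factor colon contributions are redundant), whereas the paper splits into three cases according to $m_{k-1}$ and writes the resulting colon ideals down directly.
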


\begin{Theorem} \label{CM}
Let $I$ denote the toric ideal $I_{B_{\underline{\ell},h}}$ and $J$ its initial ideal  $\mathrm{in}_{\prec}(I)$. Then  both $R/I$ and $R/J$ are Cohen-Macaulay. Moreover,
\begin{enumerate}
  \item $\mathrm{Pdim}(I)=\mathrm{Pdim}(J)=h-2$;
  \item $\mathrm{reg}(I)=\mathrm{reg}(J)=\ell_1+\cdots+\ell_h-h+2$;
   \item $\deg h_{R/I}(t)=\deg h_{R/J}(t) =\ell_1+\cdots+\ell_h-h+1$;
  \item Denote $p=h-2$ and $q=\ell_1+\cdots+\ell_h$, then $\beta_{p,q}(I)=\beta_{p,q}(J)=h-1$, and they are the only  extreme Betti-numbers of $I$ and $J$ respectively.

    \end{enumerate}
\end{Theorem}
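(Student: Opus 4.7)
The plan is to apply Proposition~\ref{general} to the initial ideal $J = \mathrm{in}_{\prec}(I)$, using the regular-quotient structure given by Proposition~\ref{main3}, and then to verify the equality condition $P(J)+Q(J)=D(J)$ of Corollary~\ref{2.5}. This immediately yields parts (1)--(4) for $J$. The statements for $I$ will then be transferred from those for $J$ using the Cohen--Macaulay property and the fact that $I$ and $J$ share the same Hilbert series.

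\textbf{Bounds for $J$.} From Proposition~\ref{main3} one reads off $P(J):=\max\{r_k\}=h-2$, so Proposition~\ref{general}(3) gives $\mathrm{Pdim}(J)\leq h-2$. For the regularity bound I will compute $d_k+\sum_s a_{k,s}-r_k$ case by case, following the three cases in the proof of Proposition~\ref{main3}. A direct telescoping in each case yields the uniform expression $\sum_{p=q}^{h}\ell_p-(h-q-1)$ for some $q\geq 1$ (with $q=i$ in Cases~1,2 and $q=i-1$ in Case~3). This is maximized at $q=1$, giving $Q(J):=\sum_{p=1}^{h}\ell_p-h+2$, so $\mathrm{reg}(J)\leq Q(J)$ by Proposition~\ref{general}(2).

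\textbf{Equality via Corollary~\ref{2.5}.} The same case analysis shows that the degree of each summand $\prod_s(1-t^{a_{k,s}})t^{d_k}$ in the bracket of Proposition~\ref{general}(1) equals $\sum_{p=q}^{h}\ell_p$, again maximized at $q=1$. To compute $D(J)$ I will enumerate the $k$'s realizing the maximum $\sum\ell_p$: these correspond to $m_{k-1}\in\{u_{1,3},u_{1,4},\ldots,u_{1,h},u_{2,3}\}$, giving exactly $h-1$ contributions, each with $r_k=h-2$, hence leading coefficient $(-1)^{h-2}$. The total leading coefficient is $(h-1)(-1)^{h-2}\neq 0$, so $D(J)=\sum_{p=1}^{h}\ell_p=P(J)+Q(J)$. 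Corollary~\ref{2.5} then gives $\mathrm{Pdim}(J)=h-2$, $\mathrm{reg}(J)=\sum\ell_p-h+2$, and identifies $\beta_{h-2,\sum\ell_p}(J)$ as the unique extremal Betti number. Comparing with the relation $(1-t)^n HS(J,t)=\sum_{i,j}(-1)^i\beta_{i,j}(J)t^j$ and noting that at $j=\sum\ell_p$ the only contributing $i$ is $h-2$, I obtain $\beta_{h-2,\sum\ell_p}(J)=h-1$.

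\textbf{Cohen--Macaulayness and transfer to $I$.} Since $B_{\underline\ell,h}$ is connected and bipartite, $\dim(R/I)=|V(B_{\underline\ell,h})|-1=2\sum\ell_p-h+1$, so $\mathrm{height}(I)=\mathrm{height}(J)=h-1$. Combined with $\mathrm{pdim}(R/J)=\mathrm{Pdim}(J)+1=h-1$, the Auslander--Buchsbaum formula shows $R/J$ is Cohen--Macaulay. Since $\beta_{i,j}(R/I)\leq\beta_{i,j}(R/J)$ and $\dim(R/I)=\dim(R/J)$, the same holds for $R/I$, with $\mathrm{pdim}(R/I)=h-1$, yielding (1) for $I$. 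The Hilbert series of $R/I$ and $R/J$ coincide, so $h_{R/I}(t)=h_{R/J}(t)=(1-t)^{n-\dim(R/J)}HS(R/J,t)$ has degree $\sum\ell_p-h+1$, proving (3). For a Cohen--Macaulay quotient, $\mathrm{reg}(R/-)$ equals $\deg h_{R/-}(t)$, so $\mathrm{reg}(I)=\mathrm{reg}(R/I)+1=\sum\ell_p-h+2$, giving (2). Finally, repeating the leading-coefficient argument of the previous paragraph now for $R/I$ shows $\beta_{h-1,\sum\ell_p}(R/I)$ equals the leading coefficient of $h_{R/I}(t)$, i.e.\ $h-1$; uniqueness of this extremal follows because any other extremal $(i,j)$ would force $\beta_{h-1,\sum\ell_p}(R/I)=0$ unless $j>\mathrm{reg}(R/I)$, which is impossible. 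Translating back to the ideal gives $\beta_{h-2,\sum\ell_p}(I)=h-1$, proving (4). The main obstacle is the careful bookkeeping in the case analysis of Step~2--3, where one must verify that the $h-1$ top-degree contributions to the bracket polynomial really do not cancel and that no other $k$ contributes in degree $\sum\ell_p$.
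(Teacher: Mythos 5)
Your proposal is correct and takes essentially the same route as the paper: it applies Proposition~\ref{general} and Corollary~\ref{2.5} to $J$ via Proposition~\ref{main3}, identifies the same $h-1$ top-degree contributions (all with sign $(-1)^{h-2}$) to get $D(J)=\ell_1+\cdots+\ell_h=P(J)+Q(J)$, and then uses the dimension of the edge ring together with Auslander--Buchsbaum and Betti-number semicontinuity to get Cohen--Macaulayness and transfer everything to $I$. Only a cosmetic slip: $h_{R/J}(t)=(1-t)^{\dim(R/J)}HS(R/J,t)$, not $(1-t)^{n-\dim(R/J)}HS(R/J,t)$; the degree you deduce is nonetheless correct.
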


\begin{proof}  Let $n$ denote $2(\ell_1+\cdots+\ell_h)$,  the number of variables of polynomial ring $\mathbb{K}[e_{i,j}\:\; 1\leq i\leq 2\ell_j, 1\leq j\leq h]$, and  let $L_k$ denote the ideal $(m_1,\ldots, m_{k-1}):m_k$ for $k=2,\ldots,\frac{h(h-1)}{2}$. If $m_{k-1}=u_{i,j}$, then, by  Proposition~\ref{main3}, we have \begin{equation*}\begin{split}(1-t)^n &HS(R/L_k[-m_k],t)\\=&\left\{
                                                                                                                 \begin{array}{ll}
                                                                                                                 (1- t^{\ell_i})\cdots (1-t^{\ell_h}) \frac{t^{\ell_i}t^{\ell_{j-1}} }{(1-t^{\ell_i})(1-t^{\ell_{j-1}})}, & \hbox{$j-i\geq 2$  and $i\geq 1$;} \\& \hbox{}\\
                                                                                                                  (1-t^{\ell_{i-1}})\cdots (1-t^{\ell_h})\frac{t^{\ell_{i-1}}t^{\ell_h}}{(1-t^{\ell_{i-1}})(1-t^{\ell_h})}, & \hbox{$j-i= 1$ and $i\geq 2$.}
                                                                                                                 \end{array}
                                                                                                               \right.
\end{split}\end{equation*}  and so the initial term of $(1-t)^nHS(R/L_k[-m_k],t)$ is as follows: \begin{equation*}\begin{split}\left\{
                                                             \begin{array}{ll}
                                                               (-1)^{h-i-1}t^{\ell_i+\cdots+\ell_h}, & \hbox{$j-i\geq 2$  and $i\geq 1$;} \\
                                                               (-1)^{h-i}t^{\ell_{i-1}+\cdots+\ell_h}, & \hbox{$j-i= 1$ and $i\geq 2$.}
                                                             \end{array}
                                                           \right.\end{split}\end{equation*}
 Hence, the initial term of $H(J,t)(1-t)^n$ is equivalent to $$(-1)^{h-2}(h-1)t^{\ell_1+\cdots+\ell_h}.$$
By adopting the notions presented in Corollary~\ref{2.5}, we have
 $D(J)=\ell_1+\cdots+\ell_h$. It  also follows from  Proposition~\ref{main3} that $Q(J)=\ell_1+\cdots+\ell_h-h+2$ and $P(J)=h-2$. This implies $D(J)=P(J)+Q(J)$, and thus $\mathrm{Pdim}(J)=h-2$ and $\mathrm{reg}(J)=\ell_1+\cdots+\ell_h-h+2$ by Corollary~\ref{2.5}.

 In view of \cite[Corollary 10.1.21]{V1}, we have $$\dim (R/J)=\dim \mathbb{K}[B_{\underline{\ell},h}]=2(\ell_1+\cdots+\ell_h)-h+1=n-h+1,$$
and $$\mathrm{depth}(R/J)=n-\mathrm{Pdim}(R/J)=n-h+1.$$
It follows that both $R/I$ and $R/J$ are Cohen-Macaulay. From this, we have $\mathrm{Pdim}(I)=\mathrm{Pdim}(J)$ and $\mathrm{reg}(I)=\mathrm{reg}(J)$. The last statements  are also clear now.{\hfill $\square$\par}
\end{proof}
\begin{Corollary} Let $I$ and $J$ be the same  as in Theorem~\ref{CM}. Then the following are equivalent:\begin{enumerate}
                                                                                  \item $R/I$ is a complete intersection;
                                                                                  \item $R/J$ is a complete intersection;
                                                                                  \item $R/I$ is  Gorenstein;
                                                                                  \item $R/J$ is Gorenstein;
                                                                                  \item $h=2$.
                                                                                \end{enumerate}
\end{Corollary}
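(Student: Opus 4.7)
The plan is to close the loop via $(5) \Rightarrow (1) \Rightarrow (3) \Rightarrow (5)$ together with $(5) \Rightarrow (2) \Rightarrow (4) \Rightarrow (5)$. The implications $(1) \Rightarrow (3)$ and $(2) \Rightarrow (4)$ are immediate from the fact that every complete intersection is Gorenstein, so the real work splits into the easy directions $(5) \Rightarrow (1), (2)$ and the nontrivial directions $(3) \Rightarrow (5), (4) \Rightarrow (5)$.

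For $(5) \Rightarrow (1)$ and $(5) \Rightarrow (2)$, I would observe that when $h = 2$ the set $\mathcal{PC}(B_{\underline{\ell}, 2})$ consists of the single walk $W_{1,2}$, so that $I = (u_{1,2} - v_{1,2})$ and $J = (u_{1,2})$ are principal ideals of $R$ generated by nonzero elements. Both quotients are therefore hypersurfaces, and in particular complete intersections.

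For $(3) \Rightarrow (5)$ and $(4) \Rightarrow (5)$, I will leverage Theorem~\ref{CM}. Writing $p = h - 2$ and $q = \ell_1 + \cdots + \ell_h$, that theorem supplies the Cohen-Macaulayness of $R/I$ and $R/J$, the projective dimensions $\mathrm{Pdim}(R/I) = \mathrm{Pdim}(R/J) = h - 1$, and the extremal values $\beta_{p,q}(I) = \beta_{p,q}(J) = h - 1$. The short exact sequence $0 \to K \to R \to R/K \to 0$ for $K \in \{I, J\}$ shifts homological index by one, so $\beta_{i,j}(R/K) = \beta_{i-1,j}(K)$ for $i \geq 1$, and in particular
$$\beta_{h-1,q}(R/I) = \beta_{h-1,q}(R/J) = h - 1.$$
Hence the total top Betti number of each quotient is at least $h - 1$. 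For a Cohen-Macaulay graded quotient of a polynomial ring, being Gorenstein is equivalent to the total top Betti number (the type) being equal to $1$; assuming either $(3)$ or $(4)$ therefore forces $h - 1 \leq 1$, which, combined with the standing hypothesis $h \geq 2$, yields $h = 2$.

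The main obstacle, such as it is, lies not in any subtle argument but in recalling the classical characterization of Gorensteinness for graded Cohen-Macaulay quotients via the last total Betti number; beyond that, the entire proof is a direct extraction from Theorem~\ref{CM}.
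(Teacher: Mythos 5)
Your proposal is correct and follows essentially the same route as the paper: for $h=2$ the graph is an even cycle so $I$ and $J$ are principal (hence complete intersections), and conversely Gorensteinness of the Cohen--Macaulay quotient forces the top total Betti number (the type) to be $1$, which contradicts the extremal Betti number $\beta_{p,q}(I)=\beta_{p,q}(J)=h-1$ from Theorem~\ref{CM} unless $h=2$. The only cosmetic difference is that you spell out the shift $\beta_{i,j}(R/K)=\beta_{i-1,j}(K)$ and the type characterization explicitly, which the paper leaves implicit.
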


\begin{proof} If $h=2$, then $B_{\underline{\ell},h}$ is an even cycle and so  $I$ and $J$ are both generated by a  polynomial. This implies both $R/I$ and $R/J$ are complete intersections. Conversely, if either $R/I$ or $R/J$ is Gorenstein, then either $\beta_p(I)=1$ or  $\beta_p(J)=1$. It follows that $h-1=\beta_{p,q}(I)=\beta_{p,q}(J)\leq 1$, and hence $h=2$.{\hfill $\square$\par}
\end{proof}

\begin{Corollary} Given a vector $(p,r)\in \mathbb{Z}^2$ with $p\geq 0$ and $r\geq 2$, there exists a bipartite graph $G$ such that $\mathrm{pdim}(I_G)=p$ and $\mathrm{reg}(I_G)=r$.
\end{Corollary}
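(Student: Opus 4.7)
The plan is to apply Theorem~\ref{CM} directly to the family $B_{\underline{\ell},h}$, solving for $h$ and $\underline{\ell}$ in terms of the prescribed pair $(p,r)$. From Theorem~\ref{CM} we have the two formulas $\mathrm{pdim}(I_{B_{\underline{\ell},h}})=h-2$ and $\mathrm{reg}(I_{B_{\underline{\ell},h}})=\ell_1+\cdots+\ell_h-h+2$, so the system $h-2=p$ and $\ell_1+\cdots+\ell_h-h+2=r$ forces $h=p+2$ and $\sum_{i=1}^{h}\ell_i=r+p$.

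Thus the whole proof reduces to checking that such a vector $\underline{\ell}\in\mathbb{Z}_{>0}^{h}$ exists under the hypotheses $p\geq 0$ and $r\geq 2$. Since $h=p+2\geq 2$ (so $B_{\underline{\ell},h}$ is a legitimate member of the family introduced in Section~4), and since $r+p\geq 2+p=h$, the equation $\ell_1+\cdots+\ell_h=r+p$ admits positive integer solutions; for concreteness one may take $\ell_1=r-1$ and $\ell_2=\cdots=\ell_{p+2}=1$, which satisfies $\ell_1\geq 1$ because $r\geq 2$.

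With this choice, Theorem~\ref{CM}.(1)--(2) applied to $G:=B_{\underline{\ell},h}$ yields $\mathrm{pdim}(I_G)=h-2=p$ and $\mathrm{reg}(I_G)=(r+p)-(p+2)+2=r$, as desired. There is essentially no obstacle in this argument since all the heavy lifting has already been done in Theorem~\ref{CM}; the only minor point to verify is that the chosen $\underline{\ell}$ lies in $\mathbb{Z}_{>0}^{h}$, which is immediate from the hypotheses $p\geq 0$ and $r\geq 2$.
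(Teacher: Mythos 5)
Your proposal is correct and follows the same route as the paper: both set $h=p+2$ and choose $\underline{\ell}\in\mathbb{Z}_{>0}^{h}$ with $\ell_1+\cdots+\ell_h=r+p$ (the paper takes $(1,\ldots,1,r-1)$, you take $(r-1,1,\ldots,1)$, which is immaterial), then apply Theorem~\ref{CM}.
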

\begin{proof} Put $h=p+2$ and $\underline{\ell}=(\underbrace{1,\ldots,1}_{h-1},r-1)$. Then the bipartite graph $B_{\underline{\ell},h}$ is what we require.{\hfill $\square$\par}
\end{proof}

 In the final part of this section, we  compute their Betti numbers of $\mathbb{K}[B_{\underline{\ell},h}]$ explicitly in the case when $\ell_2=\cdots=\ell_{h}$.
\begin{Theorem}
 Let $B_{\underline{\ell},h}$ be the bipartite graph defined above with $\underline{\ell}=(\ell_1,\ldots,\ell_h)$.

\begin{enumerate}
  \item If $\ell_1=\ell_{2}=\cdots=\ell_h$, then
$$\beta_{i,j}(I_{B_{\underline{\ell},h}})=\beta_{i,j}(\mathrm{in}_{\prec}(I_{B_{\underline{\ell},h}}))=\left\{
                                                          \begin{array}{ll}
                                                            (i+1)\binom{h}{i+2}, & \hbox{$j=\ell(i+2)$;} \\
                                                            0, & \hbox{otherwise.}
                                                          \end{array}
                                                        \right.
$$
Here, $\ell=\ell_1$.
  \item If $\ell_1\neq\ell_{2}=\cdots=\ell_h$, then $$\beta_{i,j}(I_{B_{\underline{\ell},h}})=\beta_{i,j}(\mathrm{in}_{\prec}(I_{B_{\underline{\ell},h}}))=\left\{
                                                          \begin{array}{ll}
                                                            (i+1)\binom{h-1}{i+2} & \hbox{$j=\ell(i+2)$;} \\
(h-1)\binom{h-2}{i}
& \hbox{$j=\ell(i+1)+q$;}\\
                                                            0 & \hbox{otherwise.}

                                                          \end{array}
                                                        \right.
$$
Here, $q=\ell_1$ and $\ell=\ell_{2}$.
\end{enumerate}

\begin{proof} Let $J$ denote the monomial ideal $\mathrm{in}_{\prec}(I_{B_{\underline{\ell},h}})$. In both cases, $J$ has $\ell$-regular quotients  with respect to the given order by Proposition~\ref{main3}.

(1) By Corollary~\ref{main11} together with Proposition~\ref{main3}, we have  $\beta_{i,j}(J)=0$ for $j\neq \ell (i+2)$; and
\begin{align*}
\beta_{i,\ell(i+2)}(J)&=\sum_{k=1}^{\frac{h(h-1)}{2}} \binom{r_{k}}{i} =\binom{0}{i} + 2\binom{1}{i} + 3\binom{2}{i} + \ldots + (h-1)\binom{h-2}{i}\\
&=\sum_{q=1}^{h-1} q\binom{q-1}{i} = (i+1)\sum_{q=1}^{h-1} \binom{q}{i+1}
 = (i+1)\binom{h}{i+2}.
\end{align*}
Here, the second-to-last equality follows from the identity $k\binom{n}{k} =
n\binom{n-1}{k-1}$, and the last equality follows from the identity
$\sum_{j=0}^{n}\binom{j}{k}=\binom{n+1}{k+1}$ for $k\geq 0$.

It follows from  Lemma~\ref{pure} that $\beta_{i,j}(I_{B_{\underline{\ell},h}})=\beta_{i,j}(J)$ for all $i,j\geq 0$.

(2) In view of  Proposition~\ref{main3},  the sequence of degrees of generators of $J$ in this case is as follows: $$\underbrace{2\ell, \ldots, 2\ell}_{\frac{(h-2)(h-1)}{2}}, \underbrace{\ell+q,\ldots,\ell+q}_{h-1}.$$  This implies by Theorem~\ref{bridge} that $\beta_{i,j}(I_{B_{\underline{\ell},h}})=\beta_{i,j}(J)$ for all $i,j$.

By Corollary~\ref{main11} together with Proposition~\ref{main3}, we have  $\beta_{i,j}(J)=0$ for $j\notin \{\ell (i+2),\ell(i+1)+q\}$; and
\begin{align*}
\beta_{i,\ell(i+2)}(J)&=\sum_{k=1}^{\frac{(h-1)(h-2)}{2}} \binom{r_{k}}{i} =\binom{0}{i} + 2\binom{1}{i} + 3\binom{2}{i} + \cdots + (h-2)\binom{h-3}{i}\\
&=\sum_{q=1}^{h-2} q\binom{q-1}{i} = (i+1)\sum_{q=1}^{h-2} \binom{q}{i+1}
 = (i+1)\binom{h-1}{i+2};
\end{align*}

$$\beta_{i,\ell i+\ell+q}(J)=\sum_{k=\frac{(h-1)(h-2)}{2}+1}^{\frac{h(h-1)}{2}} \binom{r_{k}}{i}=(h-1)\binom{h-2}{i}.\qquad \qquad\qquad\qquad \qquad$${\hfill $\square$\par}
\end{proof}
\end{Theorem}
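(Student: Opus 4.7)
The plan is to combine three ingredients already established in the paper: Proposition~\ref{main3}, which identifies $J := \mathrm{in}_\prec(I_{B_{\underline{\ell},h}})$ as having $\ell$-regular quotients with an explicit rank sequence; the Betti number formula of Corollary~\ref{main11}(1); and the transfer result, namely Lemma~\ref{pure} in case~(1) and Theorem~\ref{bridge} in case~(2). I will first compute $\beta_{i,j}(J)$ from the combinatorial data of Proposition~\ref{main3}, and then argue that $\beta_{i,j}(I_{B_{\underline{\ell},h}}) = \beta_{i,j}(J)$.

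A quick inspection of the proof of Proposition~\ref{main3} shows that every colon ideal $L_k$ is generated by some $Y_{1,t}$'s and $Y_{2,t}$'s with $t \geq 2$; since $\ell_2 = \cdots = \ell_h = \ell$, each such generator has degree $\ell$ in both cases, so $J$ has $\ell$-regular quotients. Moreover $\deg(u_{i,j}) = \ell_i + \ell_j$ equals $2\ell$ when $i \geq 2$ and equals $\ell + q$ when $i = 1$, where $q := \ell_1$ (with $q = \ell$ in case~(1)). By the ordering~(\ref{4.1}), the first $(h-1)(h-2)/2$ generators $m_k$ have degree $2\ell$, while the last $h-1$ generators have degree $\ell + q$. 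The hypothesis of Corollary~\ref{main11}(1) is satisfied: the differences $d_\ell - d_k$ lie in $\{0,\,\ell - q\}$, neither equal to $\ell$ since $q \geq 1$. Therefore $\beta_{i,j}(J) = \sum_{k \in K^i_j} \binom{r_k}{i}$. Using the ranks from Proposition~\ref{main3}, in case~(1) the only nonzero row is $j = \ell(i+2)$ and summing over all $k$ gives $\sum_{t=1}^{h-1} t \binom{t-1}{i} = (i+1) \sum_{t=1}^{h-1} \binom{t}{i+1} = (i+1)\binom{h}{i+2}$, via the identities $t\binom{t-1}{i} = (i+1)\binom{t}{i+1}$ and $\sum_{t=0}^{n}\binom{t}{k} = \binom{n+1}{k+1}$. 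In case~(2) the sum splits by degree: for $j = \ell(i+2)$ we restrict to the first $(h-1)(h-2)/2$ ranks (namely $0, 1, 1, 2, 2, 2, \ldots, h-3, \ldots, h-3$), obtaining $(i+1)\binom{h-1}{i+2}$ by the same telescoping; for $j = \ell(i+1) + q$ we sum over the last $h-1$ generators, each with $r_k = h-2$, yielding $(h-1)\binom{h-2}{i}$.

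To transfer Betti numbers from $J$ to $I_{B_{\underline{\ell},h}}$: in case~(1) the resolution of $J$ is pure (every nonzero $\beta_{i,j}(J)$ sits on the strand $j - i = 2\ell$), so Lemma~\ref{pure} gives $\beta_{i,j}(I_{B_{\underline{\ell},h}}) = \beta_{i,j}(J)$ at once. In case~(2) I invoke Theorem~\ref{bridge} with $s = \ell$, $(d_1, d_2) = (2\ell, \ell + q)$, $k_1 = (h-1)(h-2)/2$, $k_2 = h(h-1)/2$: the required induced subgraph $G_1$ is $B_{(\ell,\ldots,\ell),\,h-1}$, obtained from $B_{\underline{\ell},h}$ by deleting the vertices $x_{1,1}, x_{2,1}, \ldots, x_{2\ell_1-1,1}$ on the first path, and its primitive even closed walks are exactly $\{W_{i,j}\:\; 2 \leq i < j \leq h\}$, matching the first $k_1$ walks in the ordering. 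Condition~(3) of Theorem~\ref{bridge} reduces to $d_1 - d_2 = \ell - q \neq \ell$, which holds because $q \geq 1$. The main point requiring care is this last verification of the hypotheses of Theorem~\ref{bridge} in case~(2), especially the identification of $G_1$ and its primitive walks with the first block of generators; but this is immediate from the explicit parametrization of $\mathcal{PC}(B_{\underline{\ell},h})$ recalled just before Proposition~\ref{main3}.
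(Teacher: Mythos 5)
Your proposal is correct and follows essentially the same route as the paper's own proof: Proposition~\ref{main3} together with Corollary~\ref{main11} to compute $\beta_{i,j}(\mathrm{in}_{\prec}(I_{B_{\underline{\ell},h}}))$, then Lemma~\ref{pure} in case (1) and Theorem~\ref{bridge} in case (2) to transfer the Betti numbers to $I_{B_{\underline{\ell},h}}$ — indeed you verify the hypotheses of Theorem~\ref{bridge} (the induced subgraph $B_{(\ell,\ldots,\ell),h-1}$ and condition (3)) more explicitly than the paper does. One small slip worth noting: in case (1) the resolution is pure because for each $i$ the unique degree with $\beta_{i,j}\neq 0$ is $j=\ell(i+2)$, not because all Betti numbers lie on the strand $j-i=2\ell$ (that description is only accurate for $\ell=1$); since purity, not linearity, is what Lemma~\ref{pure} requires, the argument stands as written.
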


\section{Graphs $B_{\ell,h}^{s,t}$ and their toric ideals}

Let $B_{\ell,h}$ denote the graph $B_{\underline{\ell},h}$ defined in the preceding section if $\underline{\ell}=(\ell,\ldots,\ell)$.
In this section, we construct a new family of graphs by  attaching two odd cycles of length $2s+1$ and $2t+1$  at the vertex $y_1$ and $y_2$ of $B_{\ell,h}$ respectively,  which we denote by $B_{\ell,h}^{s,t}$.  The main objective of our computations is to determine the Betti numbers of the edge ring $\mathbb{K}[B_{\ell,h}^{s,t}]$. If $\ell=s=t=1$, the graph $B_{\ell,h}^{s,t}$ is exactly the graph $G_h$ defined in \cite[Definition 3.1]{FKV} and thus the findings in this section can be seen as extensions of certain existing results in \cite{FKV}.
\begin{Definition}
Let $\ell,s,t\geq 1$  and $h\geq 2$ be integers. The graph $B_{\ell,h}^{s,t}$ is the  graph which has the vertex set:
$$V=V_{\ell,h}\cup \{z_1,\cdots,z_{2s},w_1,\cdots,w_{2t}\}$$
and the  edge set
\begin{align*}
  E & =E_{\ell,h}\cup \{\{y_1,z_1\},\{z_i,z_{i+1}~|~1\leq i\leq 2s-1\},\{z_{2s},y_1\}\} \\
   & \quad \cup \{\{y_2,w_1\},\{w_i,w_{i+1}~|~1\leq i\leq 2t-1\},\{w_{2t},y_2\}\}.
\end{align*}
\end{Definition}
\noindent We label the edge of $B_{\ell,h}^{s,t}$ as follows: $e_{1,j}=\{y_1,x_{1,j}\},e_{i,j}=\{x_{i-1,j},x_{i,j}\},e_{2\ell,j}=\{y_2,x_{2\ell,j}\}$, for $i\in \{2,\ldots,2\ell-1\}$ and $j\in \{1,\ldots,h\}$. $f_1=\{y_1,z_1\}, f_2=\{z_1,z_2\},\ldots$, $f_{2s}=\{z_{2s-1},z_{2s}\}, f_{2s+1}=\{z_{2s},y_1\}$,
$g_1=\{y_2,w_1\}, g_2=\{w_1,w_2\},\ldots,g_{2t}=\{w_{2t-1},w_{2t}\}$,
$g_{2t+1}=\{w_{2t},y_2\}$.
\begin{figure}[htbp]%
    \centering

        \begin{tikzpicture}[thick, scale=0.8, every node/.style={scale=0.8}]]
        \fill (0,0) circle (.07);
        \fill (10,0) circle (.07);
        \fill (2,3) circle (.07);
        \fill (4,3) circle (.07);
        \fill (6,3) circle (.07);
        \fill (8,3) circle (.07);
        \fill (2,2) circle (.07);
        \fill (4,2) circle (.07);
        \fill (6,2) circle (.07);
        \fill (8,2) circle (.07);
        \fill (2,1) circle (.07);
        \fill (4,1) circle (.07);
        \fill (6,1) circle (.07);
        \fill (8,1) circle (.07);
        \fill (2,-1) circle (.07);
        \fill (4,-1) circle (.07);
        \fill (6,-1) circle (.07);
        \fill (8,-1) circle (.07);
        \fill (8,-2) circle (.07);
        \fill (2,-2) circle (.07);
        \fill (4,-2) circle (.07);
        \fill (6,-2) circle (.07);
        \fill (8,-2) circle (.07);
        \fill (2,-3) circle (.07);
        \fill (4,-3) circle (.07);
        \fill (6,-3) circle (.07);
        \fill (8,-3) circle (.07);
        \fill (-1.5,-1.5) circle (.07);
        \fill (-1.5,1.5) circle (.07);
        \fill (-3.5,-1.5) circle (.07);
        \fill (-3.5,1.5) circle (.07);
        \fill (11.5,-1.5) circle (.07);
        \fill (11.5,1.5) circle (.07);
        \fill (13.5,-1.5) circle (.07);
        \fill (13.5,1.5) circle (.07);
        \draw[thick] (0,0) -- (2,3);
        \draw[thick] (2,3) -- (4,3);
        \draw[thick,dashed] (4,3) -- (6,3);
        \draw[thick] (6,3) -- (8,3);
        \draw[thick] (8,3) -- (10,0);
        \draw[thick] (0,0) -- (2,2);
        \draw[thick] (2,2) -- (4,2);
        \draw[thick,dashed] (4,2) -- (6,2);
        \draw[thick] (6,2) -- (8,2);
        \draw[thick] (8,2) -- (10,0);
        \draw[thick] (0,0) -- (2,1);
        \draw[thick] (2,1) -- (4,1);
        \draw[thick,dashed] (4,1) -- (6,1);
        \draw[thick] (6,1) -- (8,1);
        \draw[thick] (8,1) -- (10,0);
        \draw[thick] (0,0) -- (2,-1);
        \draw[thick] (2,-1) -- (4,-1);
        \draw[thick,dashed] (4,-1) -- (6,-1);
        \draw[thick] (6,-1) -- (8,-1);
        \draw[thick] (8,-1) -- (10,0);
        \draw[thick] (0,0) -- (2,-2);;
        \draw[thick] (2,-2) -- (4,-2);
        \draw[thick,dashed] (4,-2) -- (6,-2);
        \draw[thick] (6,-2) -- (8,-2);
        \draw[thick] (8,-2) -- (10,0);
        \draw[thick] (0,0) -- (2,-3);
        \draw[thick] (2,-3) -- (4,-3);
        \draw[thick,dashed] (4,-3) -- (6,-3);
        \draw[thick] (6,-3) -- (8,-3);
        \draw[thick] (8,-3) -- (10,0);
        \draw[very thick,dotted] (3,0.4) -- (3,-0.4);
        \draw[very thick,dotted] (7,0.4) -- (7,-0.4);
        \draw[thick] (0,0) -- (-1.5,-1.5);
        \draw[thick] (-1.5,-1.5) -- (-3.5,-1.5);
        \draw[thick] (0,0) -- (-1.5,1.5);
        \draw[thick] (-1.5,1.5) -- (-3.5,1.5);
        \draw[thick,dashed] (-3.5,-1.5) -- (-3.5,1.5);
        \draw[thick] (10,0) -- (11.5,-1.5);
        \draw[thick] (11.5,-1.5) -- (13.5,-1.5);
        \draw[thick] (10,0) -- (11.5,1.5);
        \draw[thick] (11.5,1.5) -- (13.5,1.5);
        \draw[thick,dashed] (13.5,-1.5) -- (13.5,1.5);
        \draw (1.3,1.8) node[anchor=south east]{$e_{1,1}$};
        \draw (1.5,1) node[anchor=south east]{$e_{1,2}$};
        \draw (1.5,0.5) node[anchor=south east]{$e_{1,3}$};
        \draw (1.8,-0.5) node[anchor=south east]{$e_{1,h-2}$};
        \draw (1.8,-1.3) node[anchor=south east]{$e_{1,h-1}$};
        \draw (1.3,-2) node[anchor=south east]{$e_{1,h}$};
        \draw (3,3) node[anchor=south]{$e_{2,1}$};
        \draw (3,2) node[anchor=south]{$e_{2,2}$};
        \draw (3,1) node[anchor=south]{$e_{2,3}$};
        \draw (3,-1) node[anchor=north]{$e_{2,h-2}$};
        \draw (3,-2) node[anchor=north]{$e_{2,h-1}$};
        \draw (3,-3) node[anchor=north]{$e_{2,h}$};
        \draw (7,3) node[anchor=south]{$e_{2\ell_1-1,1}$};
        \draw (7,2) node[anchor=south]{$e_{2\ell_2-1,2}$};
        \draw (7,1) node[anchor=south]{$e_{2\ell_3-1,3}$};
        \draw (7,-1) node[anchor=north]{$e_{2\ell_{h-2}-1,h-2}$};
        \draw (7,-2) node[anchor=north]{$e_{2\ell_{h-1}-1,h-1}$};
        \draw (7,-3) node[anchor=north]{$e_{2\ell_{h}-1,h}$};
        \draw (8.7,1.8) node[anchor=south west]{$e_{2\ell_1,1}$};
        \draw (8.2,1.3) node[anchor=south west]{$e_{2\ell_2,2}$};
        \draw (8.2,0.7) node[anchor=south west]{$e_{2\ell_3,3}$};
        \draw (8.2,-0.5) node[anchor=south west]{$e_{2\ell_{h-2},h-2}$};
        \draw (8.2,-1.3) node[anchor=south west]{$e_{2\ell_{h-1},h-1}$};
        \draw (8.7,-2.1) node[anchor=south west]{$e_{2\ell_h,h}$};
        \draw (0,0) node[anchor=east]{$y_1$};
        \draw (10,0) node[anchor=west]{$y_2$};
        \draw (-0.75,0.75) node[anchor=south west]{$f_1$};
        \draw (-2.5,1.5) node[anchor=south]{$f_2$};
        \draw (-0.9,-0.75) node[anchor=north west]{$f_{2s+1}$};
        \draw (-2.5,-1.5) node[anchor=north]{$f_{2s}$};
        \draw (8.7,-2.1) node[anchor=south west]{$e_{2\ell,h}$};
        \draw (10.75,0.75) node[anchor=south east]{$g_1$};
        \draw (12.5,1.5) node[anchor=south]{$g_2$};
        \draw (10.9,-0.75) node[anchor=north east]{$g_{2t+1}$};
        \draw (12.5,-1.5) node[anchor=north]{$g_{2t}$};
        \draw (5,-3.5) node[anchor=north]{Fig. 2: The graph $B_{\ell,h}^{s,t}$};
        \end{tikzpicture}

\end{figure}

Let $\prec$ denote the graded reverse lexicographic monomial order on the polynomial ring $\mathbb{K}[e_{1,1},\cdots,e_{1,h},\cdots,e_{2\ell,1},\cdots,e_{2\ell,h},f_1,\cdots,f_{2s+1},g_1,\cdots,g_{2t+1}]$,
where the variables are ordered in the following way:
\noindent

\begin{equation*}
\begin{split}
&e_{1,1}\succ\cdots\succ e_{1,h}\succ e_{3,1}\succ\cdots\succ e_{3,h}\succ\cdots\succ e_{2\ell-1,1}\cdots\succ e_{2\ell-1,h}\succ g_1\succ\cdots\succ g_{2t+1}
\\&\succ f_1\succ\cdots\succ f_{2s+1}\succ e_{2,1}\succ\cdots\succ e_{2,h}\succ e_{4,1}\succ\cdots\succ e_{4,h}\succ\cdots\succ e_{2\ell,1}\succ\cdots\succ e_{2\ell,h}.
\end{split}
\end{equation*}
For simplicity,  we set
 $$Y_{1,i}=\prod\limits_{p=1}^{l}e_{2p-1,i}, ~Y_{2,j}=\prod\limits_{p=1}^{l}e_{2p,j}, ~U=\prod\limits_{p=1}^{s}f_{2p} \prod\limits_{p=1}^{t+1}g_{2p-1} ~\mathrm{and}~ V=\prod\limits_{p=1}^{s+1}f_{2p-1}\prod\limits_{p=1}^{t}g_{2p}.$$
Here,  $1\leq i<j\leq h$. Note that $\mathrm{deg}(Y_{1,i})=\mathrm{deg}(Y_{2,j})=\ell$ and $\mathrm{deg}(U)=\mathrm{deg}(V)=s+t+1$.
\begin{Lemma}
 Let $\ell,r,s\geq 1$ and $h\geq 2$ be integers. Then $\mathcal{G}_{\ell,h}^{s,t}=\mathcal{G}_1\cup\mathcal{G}_2\cup\mathcal{G}_3$ is the universal Gr\"{o}bner basis of the toric ideal $I_{B_{\ell,h}^{s,t}}$, where
\begin{align*}
  {\mathcal{G}_1}= & ~\{Y_{1,j}Y_{2,i}-Y_{1,i}Y_{2,j}\:\; 1\leq i< j\leq h\};\\
\mathcal{G}_2= & ~\{Y_{1,i}Y_{1,j}U-Y_{2,i}Y_{2,j}V \:\; 1\leq i< j\leq h\};\\
\mathcal{G}_3= & ~ \{Y_{1,i}^2U-Y_{2,i}^2V\:\; 1\leq i\leq h\}.
\end{align*}
In particular, the minimal generating set of $\mathrm{in}_{\prec}(I_{B_{\ell,h}^{s,t}})$  is
$$\{Y_{1,j}Y_{2,i}\:\; 1\leq i<j\leq h\}\cup\{Y_{1,i}Y_{1,j}U\:\; 1\leq i<j\leq h\}\cup\{Y_{1,i}^2U\:\; 1\leq i\leq h\}.$$

\begin{proof}
 According to \cite[Lemma~3.2]{OH}, the set of primitive even closed walk  $\mathcal{PC}(B_{\ell,h}^{s,t})$ is the disjoint union $W_1\cup W_2\cup W_3$,  where
\begin{align*}
  W_1 & =\{e_{1,i},e_{2,i},\cdots,e_{2\ell,i},e_{2\ell,j},\cdots,e_{2,j},e_{1,j}\:\; 1\leq i<j\leq h\} \\
  W_2 & =\{f_1,\cdots,f_{2s+1},e_{1,i},e_{2,i},\cdots,e_{2\ell,i},g_1,\cdots,g_{2t+1},e_{2\ell,j},\cdots,e_{2,j},e_{1,j}\:\; 1\leq i<j\leq h\} \\
  W_3 & =\{f_1,\cdots,f_{2s+1},e_{1,i},e_{2,i},\cdots,e_{2\ell,i},g_1,\cdots,g_{2t+1},e_{2\ell,i},\cdots,e_{2,i},e_{1,i}\:\; 1\leq i\leq h\}
\end{align*}
This implies $\mathcal{G}_{\ell,h}^{s,t}$ is the universal Gr\"{o}bner basis of $I_{B_{\ell,h}^{s,t}}$.  In addition, it is straightforward to get the minimal generators of $\mathrm{in}_{\prec}(I_{B_{\ell,h}^{s,t}})$ from $\mathcal{G}_{\ell,h}^{s,t}$ under the above monomial order.{\hfill $\square$\par}
\end{proof}
\end{Lemma}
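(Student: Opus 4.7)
The plan is to enumerate $\mathcal{PC}(B_{\ell,h}^{s,t})$ using Proposition~\ref{primitive}, read off the associated binomials $f_W$, and invoke \cite[Proposition~5.19]{EH} for the universal Gr\"obner basis assertion; the description of $\mathrm{in}_\prec(I_{B_{\ell,h}^{s,t}})$ will then follow by identifying the $\prec$-leading term of each $f_W$ via a direct exponent comparison.

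For the classification, the only odd cycles of $B_{\ell,h}^{s,t}$ are $C_1=(f_1,\ldots,f_{2s+1})$ at $y_1$ and $C_2=(g_1,\ldots,g_{2t+1})$ at $y_2$, each attached to the ladder $B_{\ell,h}$ via a single cut vertex. Type~(1) walks of Proposition~\ref{primitive} are the even cycles of $B_{\ell,h}^{s,t}$, which must lie in $B_{\ell,h}$ and pair two distinct $y_1y_2$-strands, giving the family $W_1$. Type~(2) is vacuous because $C_1$ and $C_2$ are vertex-disjoint. For type~(3), each connecting walk $\Gamma_k$ must run from $y_1$ to $y_2$ (the unique cut vertices joining $C_1$ and $C_2$ to the rest), and primitivity of $f_W$ forces each $\Gamma_k$ to be a canonical strand path $(e_{1,p_k},\ldots,e_{2\ell,p_k})$, since any backtracking or detour through $C_1$ or $C_2$ would exhibit a proper sub-binomial of $f_W$. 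The subcases $p_1\neq p_2$ and $p_1=p_2$ then produce $W_2$ and $W_3$. Reading off the odd- and even-positioned edges of each walk and collecting them into products yields $f_{W_1}=Y_{1,j}Y_{2,i}-Y_{1,i}Y_{2,j}$, $f_{W_2}=Y_{1,i}Y_{1,j}U-Y_{2,i}Y_{2,j}V$, and $f_{W_3}=Y_{1,i}^2U-Y_{2,i}^2V$, matching $\mathcal{G}_1,\mathcal{G}_2,\mathcal{G}_3$ respectively, so that \cite[Proposition~5.19]{EH} delivers the universal Gr\"obner basis claim.

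The initial-ideal statement rests on the observation that the smallest variables under $\prec$ are the even-first-index $e$'s, with $e_{2\ell,h}$ at the bottom. In every binomial of $\mathcal{G}_1\cup\mathcal{G}_2\cup\mathcal{G}_3$ one term contains even-first-index $e$-variables while the other contains none, so a single exponent comparison at the smallest such variable ($e_{2\ell,j}$ for $\mathcal{G}_1$ and $\mathcal{G}_2$, and $e_{2\ell,i}$ for $\mathcal{G}_3$) identifies $Y_{1,j}Y_{2,i}$, $Y_{1,i}Y_{1,j}U$, and $Y_{1,i}^2U$ as the respective leading monomials. Minimality is then automatic: the three families are separated by divisibility by $U$ and by the presence of even-first-index $e$'s, while within each family no two generators are divisibility-comparable. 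The hardest step in this program will be the primitivity check for type~(3) walks: one must explicitly exhibit a proper sub-binomial of $I_{B_{\ell,h}^{s,t}}$ whenever $\Gamma_k$ deviates from a canonical strand path, by isolating either a repeated edge or a full traversal of $C_1$ or $C_2$ as a separable piece of the walk.
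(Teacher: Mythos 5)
Your proposal takes essentially the same route as the paper: both invoke the Ohsugi--Hibi classification of primitive even closed walks (Proposition~\ref{primitive}) to enumerate $\mathcal{PC}(B_{\ell,h}^{s,t})$ as the three families $W_1,W_2,W_3$, read off the binomials $f_W$, cite \cite[Proposition~5.19]{EH} for the universal Gr\"obner basis claim, and then determine the $\prec$-leading terms directly (your write-up merely fills in details the paper leaves implicit, such as the vacuity of type~(2) and the primitivity of the connecting strands). One cosmetic slip: for $\mathcal{G}_1$ both terms do contain even-first-index $e$-variables, but your actual comparison at the smallest variable $e_{2\ell,j}$ is the correct grevlex criterion, so the conclusion stands.
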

We label the generators of $\mathrm{in}_{\prec}(I_{B_{\ell,h}^{s,t}})$ one by one as follows:
\begin{equation*}\begin{split} & m_1=Y_{1,h}Y_{2,h-1},\quad m_2=Y_{1,h}Y_{2,h-2},\quad  m_3=Y_{1,h-1}Y_{2,h-2},  \ldots,  m_{\frac{h(h-1)}{2}}=Y_{1,2}Y_{2,1},\\
&m_{\frac{h(h-1)}{2}+1}=Y_{1,h}Y_{1,h-1}U,\quad m_{\frac{h(h-1)}{2}+2}=Y_{1,h}Y_{1,h-2}U,  \ldots, m_{h(h-1)}=Y_{1,2}Y_{1,1}U,\\ & m_{h(h-1)+1}=Y_{1,h}^2U, \quad \ldots, \quad m_{h^2}=Y_{1,1}^2U.\end{split}\end{equation*}
Then $\mathrm{in}_{\prec}(I_{B_{\ell,h}^{s,t}})=(m_1,\ldots,m_{h^2})$ and we will show that $\mathrm{in}_{\prec}(I_{B_{\ell,h}^{s,t}})$ has $\ell$-regular quotients with respect to this order of generators.
\begin{Proposition}\label{quotients2}
 Let $\ell,s,t\geq 1$ and $h\geq 2$ be integers.  The initial ideal  $\mathrm{in}_{\prec}(I_{B_{\ell,h}^{s,t}})=(m_1,\ldots,m_{h^2})$ has $\ell$-regular quotients with respect to this order of generators. More precisely, the colon ideal $(m_1,\ldots, m_{k-1}):m_k$ is generated by a  regular sequence of length $r_k$ in a single degree $\ell$ for $k=2,\ldots,h^2$. Here,
$$(r_2,\ldots,r_{\frac{h(h-1)}{2}})=(\underbrace{{1,1}}_{2},\underbrace{2,2,2}_{3},\ldots,\underbrace{h-2,h-2,\ldots,h-2}_{h-1});$$
 If $\frac{h(h-1)}{2}+1\leq k\leq h^2$, then $h-1\leq r_k\leq 2h-2$. Moreover, for any $p$ with $h-1\leq p \leq 2h-2$, one has
 $$~|\{\frac{h(h-1)}{2}+1\leq k\leq h^2 \:\;  r_k=p\}|=2h-p-1.$$
\begin{proof}
\noindent\textit{Case 1.} For $2\leq k\leq \frac{h(h-1)}{2}$, the result follows in a similar way as in the proof of Proposition~\ref{main3}.

\noindent\textit{Case 2.} If $k\in\{\frac{h(h-1)}{2}+1,\cdots,h(h-1)\}$, then  $m_k=Y_{1,j}Y_{1,i}U$ for some $1\leq i< j\leq h$. If $j<h$, then
\begin{align*}
  L_k & =(Y_{1,h}Y_{2,h-1}, \ldots, Y_{1,2}Y_{2,1},Y_{1,h}Y_{1,h-1}U,\ldots,Y_{1,j+1}Y_{1,i}U):Y_{1,j}Y_{1,i}U \\
   & =(Y_{1,i+1},\ldots,\widehat{Y_{1,j}},\ldots,Y_{1,h},Y_{2,1},\ldots,Y_{2,j-1}).
\end{align*}
Here, $\widehat{Y_{1,j}}$ represents the removal of the element $Y_{1,j}$ from the set of generators. Thus we get $r_k=h-i+j-2$.
If $j=h$, then
\begin{align*}
  L_k & =(Y_{1,h}Y_{2,h-1}, \ldots, Y_{1,2}Y_{2,1},Y_{1,h}Y_{1,h-1}U,\ldots,Y_{1,i+2}Y_{1,i+1}U):Y_{1,h}Y_{1,i}U \\
   & =(Y_{1,i+1},\ldots,\ldots,Y_{1,h-1},Y_{2,1},\ldots,Y_{2,h-1}).
\end{align*}
 Thus we also get $r_k=2h-i-2=h+j-i-2$.

\noindent\textit{Case 3.} If $k\in\{h(h-1)+1,\cdots,h^2\}$, then $m_k=Y_{1,i}^2U$ for some $1\leq i\leq h$,  and
\begin{align*}
  L_k & =(Y_{1,h}Y_{2,h-1}, \ldots, Y_{1,2}Y_{1,1}U, Y_{1,h}^2U, \ldots, Y_{1,i+1}^2U):Y_{1,i}^2U \\
   & =(Y_{1,1}, \ldots, \widehat{Y_{1,i}}, \ldots,  Y_{1,h}, Y_{2,1},\ldots,Y_{2,i-1}).
\end{align*}
Here, $\widehat{Y_{1,i}}$ represents the removal of the element $Y_{1,i}$ from the set of generators. It follows that $r_k=h+i-2$.

 By  the above discussion on the three cases, it follows that ${\rm in}_{\prec}(I_{B_{\ell,h}^{s,t}})$ has $\ell$-regular quotients with respect to the given order. Integrating the discussions on case 2 and case 3, we see that   if $\frac{h(h-1)}{2}+1\leq k\leq h^2$, then $h-1\leq r_k\leq 2h-2$. Moreover, for any $h-1\leq p\leq  2h-2$, one has  \begin{align*}|\{\frac{h(h-1)}{2}+1\leq k\leq h^2 \:\;  r_k=p\}|&=1+|\{(i,j)\:\; j-i=p-h+2, 1\leq i<j\leq h\}| \\ &=2h-p-1.\end{align*} This completes the proof.{\hfill $\square$\par}
\end{proof}
\end{Proposition}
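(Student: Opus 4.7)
The plan is to split the argument into three blocks matching the three families of generators: type-I generators $Y_{1,j}Y_{2,i}$ (indices $2\le k\le\binom{h}{2}$), type-II generators $Y_{1,j}Y_{1,i}U$ (indices $\binom{h}{2}+1\le k\le h(h-1)$), and type-III generators $Y_{1,i}^2U$ (indices $h(h-1)+1\le k\le h^2$). The key structural observation that drives everything is that the $2h$ monomials $Y_{1,1},\ldots,Y_{1,h},Y_{2,1},\ldots,Y_{2,h}$ all have degree $\ell$ and pairwise disjoint supports, so any subfamily of them is automatically a regular sequence in degree $\ell$. Hence it suffices, in each case, to exhibit $L_k=(m_1,\ldots,m_{k-1}):m_k$ as generated minimally by such a subfamily, and then the $\ell$-regular-quotient property follows at once.

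Block (I) reduces verbatim to Proposition~\ref{main3}, because only type-I generators precede $m_k$ and the colon computation is formally the one carried out there. This immediately yields $(r_2,\ldots,r_{h(h-1)/2})=(1,1,2,2,2,\ldots,h-2,\ldots,h-2)$ with no extra work.

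Blocks (II) and (III) are where the real computation happens, and both rest on the fact that $U$ is coprime with every $Y_{2,b}$ and with every $Y_{1,c}$. So for a type-II $m_k=Y_{1,j}Y_{1,i}U$, dividing an earlier $Y_{1,a}Y_{2,b}$ into $m_k$ is possible only when $a\in\{i,j\}$, producing the quotient $Y_{2,b}$; collecting distinct contributions yields the block $Y_{2,1},\ldots,Y_{2,j-1}$. Dividing an earlier type-II generator $Y_{1,p}Y_{1,q}U$ into $m_k$ yields a quotient of the form $Y_{1,\star}$ for some index $\star\notin\{i,j\}$; after discarding multiples, this collapses to $Y_{1,i+1},\ldots,\widehat{Y_{1,j}},\ldots,Y_{1,h}$ (hat vacuous when $j=h$). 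Combined, $L_k$ has $h+j-i-2$ minimal generators drawn from $\{Y_{1,*},Y_{2,*}\}$, so $r_k=h+j-i-2$. Block (III) is treated analogously, giving $L_k=(Y_{1,1},\ldots,\widehat{Y_{1,i}},\ldots,Y_{1,h},Y_{2,1},\ldots,Y_{2,i-1})$ and $r_k=h+i-2$.

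The final distributional claim is then a counting exercise: for $h-1\le p\le 2h-2$, block (II) contributes the $2h-p-2$ pairs $(i,j)$ with $j-i=p-h+2$, while block (III) contributes the single $i=p-h+2$, yielding $2h-p-1$ in total (one should verify the boundary cases $p=h-1$ and $p=2h-2$ by hand). The main obstacle I anticipate is the bookkeeping in block (II): ruling out redundant or extraneous generators of $L_k$ arising from cross-type quotients and confirming that the minimal generating set is exactly the asserted subfamily. Once that is pinned down, disjointness of supports delivers the regular-sequence conclusion for free.
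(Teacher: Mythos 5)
Your proposal is correct and follows essentially the same route as the paper: split by the three generator types, compute each colon ideal $L_k$ explicitly as an ideal generated by a subfamily of the pairwise support-disjoint degree-$\ell$ monomials $Y_{1,*},Y_{2,*}$ (reducing the type-I block to Proposition~\ref{main3}), and then count indices by the value of $j-i$ to get $2h-p-1$. The only difference is that you spell out the coprimality/redundancy bookkeeping that the paper leaves implicit, which is exactly the verification needed to justify its displayed equalities.
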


\begin{Theorem}\label{betti2}
Fix integers $\ell,s,t\geq 1$ and $h\geq 2$. Then the graded Betti numbers of $I_{B_{\ell,h}^{s,t}}$ and ${\rm in}_{\prec}(I_{B_{\ell,h}^{s,t}})$ are equal, and the explicit graded Betti numbers are:
$$\beta_{i,j}(I_{B_{\ell,h}^{s,t}})=\left\{
                        \begin{array}{ll}
                          (i+1)\binom{h}{i+2}, & \hbox{$j=\ell(i+2)$;} \\

                              \binom{2h}{i+2}-(i+3)\binom{h}{i+2},                        & \hbox{$j=\ell(i+2)+s+t+1$;} \\
                          0, & \hbox{otherwise.}
                        \end{array}
                      \right.
$$
\begin{proof}
Proposition~\ref{quotients2} shows ${\rm in}_{\prec}(I_{B_{\ell,h}^{s,t}})$ has $\ell$-regular quotients with respect to the  ordered set $\{m_1,\ldots, m_{h^2}\}$. Note that $\mathrm{deg}(m_1)=\cdots=\mathrm{deg}(m_{\frac{h(h-1)}{2}})=2\ell$ and $\mathrm{deg}(m_{\frac{h(h-1)}{2}+1})=\cdots=\mathrm{deg}(m_{h^2})=2\ell+s+t+1$.  Since $B_{\ell,h}$ is an induced graph of $B_{\ell,h}^{s,t}$ and since $\mathrm{in}_{\prec}(I_{B_{\ell,h}})=(m_1,\ldots, m_{\frac{h(h-1)}{2}})$,  it follows that $\beta_{i,j}(I_{B_{\ell,h}^{s,t}})=\beta_{i,j}({\rm in}_{\prec}(I_{B_{\ell,h}^{s,t}}))$ for all $i,j\geq0$ by Theorem~\ref{bridge}.

 We still need to calculate the graded betti number of ${\rm in}_{\prec}(I_{B_{\ell,h}^{s,t}})$ for all $i,j\geq0$.
By Theorem~\ref{main1}, we see that $\beta_{i,j}({\rm in}_{\prec}(I_{B_{\ell,h}^{s,t}}))=0$ for  $j\notin\{ \ell(i+2),\ell(i+2)+s+t+1\}$, and that $\beta_{i,j}({\rm in}_{\prec}(I_{B_{\ell,h}^{s,t}}))=\beta_{i,j}({\rm in}_{\prec}(I_{B_{\underline{\ell},h}}))=(i+1)\binom{h}{i+2}$  if $j=\ell(i+2)$.

 If $j=\ell(i+2)+s+t+1$, it follows from Proposition~\ref{quotients2} that
\begin{equation*}
\begin{split}
\beta_{i,j}({\rm in}_{\prec}(I_{B_{\ell,h}^{s,t}}))&=\sum\limits_{p=h-1}^{2h-2}(2h-1-p)\binom{p}{i}\\
&=\sum\limits_{p=h-1}^{2h-2}2h\binom{p}{i}-\sum\limits_{p=h-1}^{2h-2}(i+1)\binom{p+1}{i+1}\\
&=2h\binom{2h-1}{i+1}-2h\binom{h-1}{i+1}-(i+1)\binom{2h}{i+2}+(i+1)\binom{h}{i+2}\\
&=\binom{2h}{i+2}-(i+3)\binom{h}{i+2}.
\end{split}
\end{equation*}
\noindent Here, we employ the equation $\sum_{j=0}^{n}\binom{j}{i}=\binom{n+1}{i+1}$  multiple times. Therefore, the proof is concluded.{\hfill $\square$\par}
\end{proof}
\end{Theorem}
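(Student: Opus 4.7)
The strategy is a two-step reduction: first apply the bridge theorem (Theorem~\ref{bridge}) to replace $I_{B_{\ell,h}^{s,t}}$ by its initial ideal at the level of Betti numbers, and then apply Theorem~\ref{main1} together with the quantitative data from Proposition~\ref{quotients2} to compute $\beta_{i,j}(\mathrm{in}_{\prec}(I_{B_{\ell,h}^{s,t}}))$ explicitly.

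To carry out the first step, I verify the three hypotheses of Theorem~\ref{bridge} with $s = \ell$. Hypothesis (1), that $\mathrm{in}_{\prec}(I_{B_{\ell,h}^{s,t}})$ has $\ell$-regular quotients in the order $m_1,\ldots,m_{h^2}$, is exactly Proposition~\ref{quotients2}. For hypothesis (2), the generators split into two degree blocks: the first $\tfrac{h(h-1)}{2}$ have degree $d_1=2\ell$ and the remaining $\tfrac{h(h+1)}{2}$ have degree $d_2=2\ell+s+t+1$. The induced subgraph corresponding to the first block is $B_{\ell,h}$; since $B_{\ell,h}$ is bipartite, Proposition~\ref{primitive} forces $\mathcal{PC}(B_{\ell,h}) = W_1$, matching the first block of $\mathrm{in}_{\prec}$-generators. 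For hypothesis (3), we need $d_1-d_2\neq \ell$, but $d_1-d_2=-(s+t+1)<0<\ell$, so the inequality is automatic. Theorem~\ref{bridge} then yields $\beta_{i,j}(I_{B_{\ell,h}^{s,t}})=\beta_{i,j}(\mathrm{in}_{\prec}(I_{B_{\ell,h}^{s,t}}))$ for all $i,j$.

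For the second step, Theorem~\ref{main1} implies that $\beta_{i,j}(\mathrm{in}_{\prec}(I_{B_{\ell,h}^{s,t}}))$ vanishes unless $j\in\{\ell(i+2),\ \ell(i+2)+s+t+1\}$. When $j=\ell(i+2)$, only the first block of generators contributes, and the contributing $r_k$'s coincide with those in case (1) of the previous Betti-table computation for $B_{\ell,h}$, giving $(i+1)\binom{h}{i+2}$. When $j=\ell(i+2)+s+t+1$, only the second block contributes, and by the tally in Proposition~\ref{quotients2} (namely, $r_k=p$ occurs exactly $2h-p-1$ times for $p=h-1,h,\ldots,2h-2$), the Betti number equals
\[
\sum_{p=h-1}^{2h-2}(2h-p-1)\binom{p}{i}.
\]

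The remaining task is to simplify this sum to $\binom{2h}{i+2}-(i+3)\binom{h}{i+2}$. Writing $2h-p-1=2h-(p+1)$ and using the identities $(p+1)\binom{p}{i}=(i+1)\binom{p+1}{i+1}$ and $\sum_{j=0}^{n}\binom{j}{k}=\binom{n+1}{k+1}$, one gets
\begin{align*}
\sum_{p=h-1}^{2h-2}(2h-p-1)\binom{p}{i}
&=2h\Bigl[\binom{2h-1}{i+1}-\binom{h-1}{i+1}\Bigr]-(i+1)\Bigl[\binom{2h}{i+2}-\binom{h}{i+2}\Bigr]\\
&=(i+2)\binom{2h}{i+2}-2(i+2)\binom{h}{i+2}-(i+1)\binom{2h}{i+2}+(i+1)\binom{h}{i+2}\\
&=\binom{2h}{i+2}-(i+3)\binom{h}{i+2},
\end{align*}
where the second equality uses $2h\binom{2h-1}{i+1}=(i+2)\binom{2h}{i+2}$ and $2h\binom{h-1}{i+1}=2(i+2)\binom{h}{i+2}$. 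The substantive obstacle is really the careful bookkeeping of the multiplicities $r_k$ in Proposition~\ref{quotients2}; the bridge theorem and Theorem~\ref{main1} then do the work, and the binomial manipulation is a routine collapse of sums.
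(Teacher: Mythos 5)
Your proposal is correct and follows essentially the same route as the paper: Proposition~\ref{quotients2} gives the $\ell$-regular quotients, Theorem~\ref{bridge} (with the two degree blocks $2\ell$ and $2\ell+s+t+1$ and induced subgraph $B_{\ell,h}$) transfers the Betti numbers to the initial ideal, and Theorem~\ref{main1} plus the multiplicity count $|\{k: r_k=p\}|=2h-p-1$ yields the same sum $\sum_{p=h-1}^{2h-2}(2h-p-1)\binom{p}{i}$, which you collapse by the same hockey-stick and absorption identities the paper uses. The only cosmetic difference is that you rewrite $2h\binom{2h-1}{i+1}$ as $(i+2)\binom{2h}{i+2}$ before combining terms, which is an equivalent bookkeeping of the identical computation.
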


\begin{Corollary}
Fix integers $\ell,s,t\geq 1$ and $h\geq 2$. Then we have
\begin{enumerate}
  \item ${\rm{pdim}}(I_{B_{\ell,h}^{s,t}})=2h-2$;
  \item ${\rm{reg}}(I_{B_{\ell,h}^{s,t}})=2\ell h+s+t-2h-1$;
  \item ${\rm{deg}}~h_{\mathbb{K}[B_{\ell,h}^{s,t}]}(t)=(2\ell-1)h+s+t+1$.
\end{enumerate}
\begin{proof}
The assertions (1) and (2) follow from either Theorem~\ref{betti2} or Proposition~\ref{quotients2} directly. In the same vein, we have $\beta_{2h-2,2\ell h+s+t+1}(I_{B_{\ell,h}^{s,t}})$ is the unique extremal  Betti number of $I_{B_{\ell,h}^{s,t}}$ and hence $\beta_{2h-1,2\ell h+s+t+1}(\mathbb{K}[B_{\ell,h}^{s,t}])$ is the unique extremal  Betti number of $\mathbb{K}[B_{\ell,h}^{s,t}]$.

 Since the graph $B_{\ell,h}^{s,t}$ has $2\ell h+2s+2t+2$ edges and $2\ell h+2s+2t-h+2$ vertices, we have $\mathbb{K}[E(B_{\ell,h}^{s,t})]$ is a polynomial ring in $2\ell h+2s+2t+2$ variables, and
${\rm{dim}}(\mathbb{K}[B_{\ell,h}^{s,t}])=2\ell h+2s+2t-h+2$ by \cite[Corollary~10.1.21]{V1}. It follows that  $${\rm{deg}}~h_{\mathbb{K}[B_{\ell,h}^{s,t}]}(t)=2\ell h+s+t+1+2\ell h+2s+2t-h+2-(2\ell h+2s+2t+2),$$
as required.{\hfill $\square$\par}
\end{proof}
\end{Corollary}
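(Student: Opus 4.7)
The plan is to read off (1) and (2) directly from the explicit Betti tables computed in Theorem~\ref{betti2}, and then derive (3) from the Hilbert series via the general identity $\deg h_{M}(t) = (\text{top degree in numerator of }HS(M,t)) - \mathrm{codim}(M)$.

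For parts (1) and (2), I would first locate the nonzero Betti numbers in the two families from Theorem~\ref{betti2}. Since $\binom{h}{i+2} = 0$ for $i > h-2$, the first family $\beta_{i,\ell(i+2)}(I) = (i+1)\binom{h}{i+2}$ contributes nonzero Betti numbers only for $0 \le i \le h-2$. For the second family $\beta_{i,\ell(i+2)+s+t+1}(I) = \binom{2h}{i+2} - (i+3)\binom{h}{i+2}$, evaluating at $i = 2h-2$ gives $\binom{2h}{2h} - (2h+1)\cdot 0 = 1$, so this family is nonzero at $i = 2h-2$ and vanishes beyond. Thus $\mathrm{pdim}(I_{B_{\ell,h}^{s,t}}) = 2h - 2$, establishing (1). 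For the regularity, I would compare the maximum of $j - i$ over each family: the first family attains $\ell h - h + 2$ at $i = h-2$, while the second family attains its maximum at $i = 2h - 2$ with value $2\ell h + s + t + 1 - (2h - 2)$. The latter dominates and yields (2). In particular, $(2h-2, 2\ell h + s + t + 1)$ is the unique extremal Betti-position of $I_{B_{\ell,h}^{s,t}}$.

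For part (3), I would invoke the identity
\[
HS(\mathbb{K}[B_{\ell,h}^{s,t}], t) = \frac{\sum_{i,j}(-1)^{i}\beta_{i,j}(\mathbb{K}[B_{\ell,h}^{s,t}])\, t^{j}}{(1-t)^{n}} = \frac{h_{\mathbb{K}[B_{\ell,h}^{s,t}]}(t)}{(1-t)^{\dim \mathbb{K}[B_{\ell,h}^{s,t}]}},
\]
from which $\deg h_{\mathbb{K}[B_{\ell,h}^{s,t}]}(t)$ equals the largest $j$ with $\beta_{i,j}(\mathbb{K}[B_{\ell,h}^{s,t}]) \neq 0$ minus the codimension $n - \dim \mathbb{K}[B_{\ell,h}^{s,t}]$. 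From the observation above, the top degree in the numerator is $2\ell h + s + t + 1$. For the codimension, I would count the edges $n = 2\ell h + 2s + 2t + 2$ directly from the description of $E(B_{\ell,h}^{s,t})$, and then appeal to \cite[Corollary~10.1.21]{V1}: since $B_{\ell,h}^{s,t}$ contains odd cycles (hence is non-bipartite and connected), $\dim \mathbb{K}[B_{\ell,h}^{s,t}] = |V(B_{\ell,h}^{s,t})| = 2\ell h + 2s + 2t - h + 2$. Subtracting gives codimension $h$, and hence $\deg h_{\mathbb{K}[B_{\ell,h}^{s,t}]}(t) = (2\ell-1)h + s + t + 1$.

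The only nontrivial bookkeeping is making sure the maximum of $j - i$ in Theorem~\ref{betti2} really comes from the second family rather than the first; since the difference is $\ell h + s + t + 1 - h + 1 = h(\ell - 1) + s + t + 2 > 0$ for all admissible parameters, this is immediate. Everything else is combinatorial verification and a routine vertex/edge count on the graph $B_{\ell,h}^{s,t}$, so there is no genuine obstacle beyond assembling these pieces.
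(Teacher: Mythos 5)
Your overall route coincides with the paper's: parts (1) and (2) are read off from the explicit Betti numbers of Theorem~\ref{betti2} (the paper equally allows Proposition~\ref{quotients2}), and part (3) comes from the Hilbert series identity together with the edge/vertex count and $\dim\mathbb{K}[B_{\ell,h}^{s,t}]=|V(B_{\ell,h}^{s,t})|$ via \cite[Corollary~10.1.21]{V1}; your identification of the top Betti position $(2h-2,\,2\ell h+s+t+1)$ is exactly the paper's unique extremal Betti number. However, there is one point you cannot pass over silently: the maximum of $j-i$ you compute is $2\ell h+s+t+1-(2h-2)=2\ell h+s+t-2h+3$, and this is \emph{not} the expression $2\ell h+s+t-2h-1$ printed in item (2), so the sentence ``the latter dominates and yields (2)'' is not correct as written. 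In fact the printed formula in (2) is incompatible with Theorem~\ref{betti2} and with the extremal Betti number at $(2h-2,\,2\ell h+s+t+1)$ asserted in the paper's own proof: for $\ell=s=t=1$ and $h=2$ the Betti table is $\beta_{0,2}=1$, $\beta_{0,5}=3$, $\beta_{1,6}=4$, $\beta_{2,7}=1$, giving regularity $5$, whereas the printed formula gives $1$. So your computed value is the correct one and the displayed statement of (2) contains a slip; a complete verification must flag this discrepancy (and correct the formula to $2\ell h+s+t-2h+3$) rather than identify the two expressions.

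Two further minor remarks. First, the gap between the maxima of $j-i$ in the two families is $h(\ell-1)+s+t+1$, not $h(\ell-1)+s+t+2$; this is harmless since positivity is all you need. Second, in part (3) you pass from ``largest $j$ with $\beta_{i,j}\neq 0$'' to the degree of the numerator $\sum_{i,j}(-1)^i\beta_{i,j}t^j$; this needs the observation that no cancellation occurs in that top degree, which does hold here because $(i,j)=(2h-2,\,2\ell h+s+t+1)$ is the only nonzero Betti number in degree $2\ell h+s+t+1$ (the first family would require $\binom{h}{i+2}\neq 0$ with $i+2>h$). The paper relies on the same fact implicitly through its extremal Betti number statement, so this is a matter of making the step explicit rather than a defect of your approach.
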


{\bf \noindent Acknowledgment:} This project is supported by NSFC (No. 11971338)
\vspace{5mm}

{\bf \noindent Data availability:} The data used to support the findings of this study are included within the article.

\vspace{5mm}

{\bf \noindent Statement:}    On behalf of all authors, the corresponding author states that there is no conflict of interest.

\end{document}